\newcommand*{\R}{\mathbf{R}}
\newcommand*{\dhil}[2]{\ensuremath{d_{H}(#1,#2)}}
\newcommand*{\dhillam}[3]{\ensuremath{d_{H,#1}(#2,#3)}}
\newcommand*{\dthom}[2]{\ensuremath{d_{T}(#1,#2)}}
\newcommand*{\dthomlam}[3]{\ensuremath{d_{T,#1}(#2,#3)}}
\newcommand*{\driem}[2]{\ensuremath{d_{R}(#1,#2)}}
\newcommand*{\pp}{\ensuremath{\mathcal{P}}}
\newcommand*{\normdiam}[1]{\ensuremath{|#1|_{\sigma}}}
\newcommand*{\auto}[1]{\ensuremath{\mathop{\text{Aut}}(#1)}}
\newcommand*{\isom}[1]{\ensuremath{\mathop{\text{Iso}}(#1)}}
\newcommand*{\M}[2]{\ensuremath{\mathop{\text{M}}(#1,#2)}}
\newcommand*{\MC}[3]{\ensuremath{\mathop{\text{M}_{#1}}(#2,#3)}}
\newcommand*{\Trace}[1]{\ensuremath{\mathop{\text{Tr}}(#1)}}
\newcommand*{\trace}[1]{\ensuremath{\mathop{\text{Tr}}(#1)}}
\newcommand*{\quadr}[1]{\ensuremath{\mathop{\text{P}}(#1)}}
\newcommand*{\idem}[1]{\ensuremath{\mathcal{\text{P}}(#1)}}
\newcommand*{\idemrank}[2]{\ensuremath{\mathcal{P}_{#1}(#2)}}
\newcommand*{\specmean}[2]{\ensuremath{#1\mu #2}}
\newcommand*{\Sigm}[2]{\ensuremath{\mathfrak{S}(#1,#2)}}
\newcommand*{\Sigmun}[2]{\ensuremath{\mathfrak{S}_1(#1,#2)}}
\newcommand*{\Sigmdeux}[2]{\ensuremath{\mathfrak{S}_2(#1,#2)}}
\newcounter{longtmp}
\DeclareMathOperator{\diam}{diam}
\DeclareMathOperator{\spec}{spec}
\DeclareMathOperator{\rank}{rank}
\DeclareMathOperator{\card}{card}
\newtheoremstyle{mytheorem}
{3pt}% ¨Space above
{3pt}% ¨Space below
{\normalfont}% ¨Body font
{0pt}% ¨Indent amount
{\bfseries}% ¨Theorem head font
{.}% ¨Punctuation after theorem head
{.5em}% ¨Space after theorem head
{}% ¨Theorem head spec (can be left empty)
\theoremstyle{mytheorem}
\newtheorem{thm}{Theorem}[section]
\newtheorem{prop}[thm]{Proposition}
\newtheorem{lemma}[thm]{Lemma}
\newtheorem{cor}[thm]{Corollary}
\newtheorem{question}[thm]{Question} 
\renewenvironment{proof}{\par\noindent{\scshape Proof:}}{\qed\vspace{0.1\baselineskip}\newline\noindent}
\newenvironment{proofarg}[1]{\par\noindent{\scshape #1:}}{\qed\vspace{0.1\baselineskip}\newline\noindent}
\newtheorem{defi}[thm]{Definition}
\newtheorem{rmk}[thm]{Remark}
\title{Symmetric cones, the Hilbert and Thompson metrics}
\author{Bosch\'e Aur\'elien}
\address{Institut Fourier, 100 rue des maths, BP 74, 38402 St Martin d'H\`eres cedex, France}
\begin{document}
\maketitle
\begin{abstract}
  Symmetric cones can be endowed with at least two interesting non Riemannian metrics: the Hilbert and the Thompson metrics. It is trivial that the linear maps preserving the cone are isometries for those two metrics. Oddly enough those are not the only isometries in general. We give here a full description of the isometry groups for both the Hilbert and the Thompson metrics using essentially the theory of euclidean Jordan algebras. Those results were already proved for the symmetric cone of complexe positive hermitian matrices by L.~Moln{\'a}r in \cite{MR2529894}. In this paper however we do not make any assumption on the symmetric cone under scrutiny (it could be reducible and contain exceptional factors).
\end{abstract}
% \begin{abstract}
% Symmetric cones are convex, open, proper cones and hence can be endowed with at least three interesting metrics: a Riemannian symmetric metric, the Thompson and the Hilbert metrics. It is easy to see that the set of linear isomorphisms of the ambient space preserving the cone form a group of isometries for those three metrics. This group is called the automorphism group of the convex. But as pointed out by de~la~Harpe in \cite{de1993hilbert}, the automorphism group of the proper open cone defined by the standard $n$-simplex is a subgroup of index $2$ in the isometry groups of the Hilbert metric. L.~Moln{\'a}r showed in \cite{MR2529894} that the same result holds for both the Hilbert metric and the Thompson metric on the symmetric cone of hermitian complex matrices. The goal of this paper is to generalise this to the set of all (possibly reducible) symmetric cones.
% \end{abstract}

\section{Preliminaries}
\label{sec:introdeux}
A cone is a subset $\mathcal{C}$ of some euclidean space $\R^n$ that is invariant by positive scalar exterior multiplication. A convex cone is a cone that is also a convex subset of $\R^n$. A cone $\mathcal{C}$ is proper (resp. open) if its closure contains no complete line (resp. if it's interior is not empty).  In this paper we deal exclusively with open proper cones and $\mathcal{C}$ will always be such a set. The product of two cones is just the usual product of sets. A cone $\mathcal{C}\in\R^n$ is reducible if $\R^n$ splits orthogonally as the sum of two subspaces $A$ and $B$ each  containing a cone $\mathcal{C}_A$ and $\mathcal{C}_B$ such that $\mathcal{C}$ is the product of $\mathcal{C}_A$ and $\mathcal{C}_B$, i.e. the set of all sums of elements of $\mathcal{C}_A$ and $\mathcal{C}_B$. Otherwise we say that $\mathcal{A}$ is irreducible. To a cone $\mathcal{C}\in\R^n$ we attach the set \auto{\mathcal{C}} of all linear isomorphisms of $\R^n$ that preserve $\mathcal{C}$. This is a group called the automorphism group of $\mathcal{C}$. We associate to a cone $\mathcal{C}$ another cone $\mathcal{C}^*$ called its dual and defined by
\begin{equation*}
  \mathcal{C}^*=\left\{x\in\R^n\mid\forall y\in\mathcal{C},\ \langle x,y\rangle>0\right\}.
\end{equation*}
 We say that a cone is self-dual if it is equal to its dual. A cone is symmetric if and only if its automorphism group acts transitively on it (or equivalently if it acts transitively on the set of rays of $\mathcal{C}$) and if it is self-dual. A pointed cone is a couple $(\mathcal{C},e)$ where $\mathcal{C}$ is a cone and $e\in\mathcal{C}$. It is known that for every pointed symmetric cone $(\mathcal{C},e)$, $\mathcal{C}\subset\R^n$, one can canonically construct a euclidean Jordan structure $J$  on the ambient space $\R^n$, and that reciprocally, when $\R^n$ is endowed with a euclidean Jordan  structure one can canonically construct a pointed symmetric cone $(\mathcal{C},e)$. Those two operations are inverse to each other, and a cone is irreducible if and only if the Jordan algebra associated to it (after choosing a base point) is simple (and this does not depend on the choice of the base point). We recommend \cite{faraut1994analysis} for the general theory of Jordan algebras and symmetric cones. In this paper, unless otherwise stated, $J$ will always denote a euclidean Jordan algebra and $(\mathcal{C},e)$ the symmetric cone associated to it. Remark that $e$ is then the identity of $J$.

Let now $\mathcal{C}$ be any proper open convex cone of $\R^n$. For $P$, $Q\in\mathcal{C}$ we define $\M{P}{Q}=\inf\left\{\,t>0\mid tQ-P\in\mathcal{C}\,\right\}$, and then
\begin{align*}
\dthom{P}{Q}&=\log\max\left(\M{P}{Q},\M{Q}{P}\right),\\
\dhil{P}{Q}&=\log \M{P}{Q}\M{Q}{P}.
\end{align*}
Then $\dthom{\cdot}{\cdot}$ is a metric on $\mathcal{C}$ whereas $\dhil{\cdot}{\cdot}$ is only a pseudo-metric (i.e. it is not definite) on $\mathcal{C}$. Since the condition $\dhil{P}{Q}=0$ and $Q=\lambda P$ for some $\lambda>0$ are equivalent the pseudo-metric $\dhil{\cdot}{\cdot}$ induces a metric on the set of rays through $\mathcal{C}$ i.e. on the projectification of $\mathcal{C}$.

A euclidean Jordan algebra $J$ is a finite dimensional linear space  endowed with a (not necessarily alternative) bilinear commutative product such that for all $(a,b)\in V$ we have $a\cdot(b \cdot a^2)=(a\cdot b)\cdot a^2$ and such that $a^2+b^2=0$ implies $a=b=0$. Such an an algebra is always unital and we shall denote $e$ its unit. Hence by assumption a euclidean Jordan algebra is commutative but associativity fails in general. This failure of associativity in turn creates some ``non-cummutativity'' effects (this is certainly the reason why Jordan investigated those algebras for their possible use in quantum theories). For example we define a center which might very well not be trivial
\begin{defi}
  The center of a Jordan algebra $J$ is the subalgebra consisting of all elements $x\in J$ satisfying
  \begin{equation*}
    \forall a,b\in J,\ x(ab)=(xa)b.
  \end{equation*}
\end{defi}
The set of squares of a euclidean Jordan algebra defines a closed proper cone and its interior, the connected component of the unit in the set of invertible elements of the algebra, is a symmetric cone. An element $p\in J$ is an idempotent if $p^2=p$. The map $p\mapsto e-p$ is a bijection of the set of idempotents of $J$. The image of $p$ under this map will be written $p^\prime$. Two idempotents $p$ and $q$ are orthogonal if $pq=0$. An idempotent is primitive if it cannot be written as the sum of two orthogonal elements.
\begin{defi}
  A Jordan frame is a family $(p_i)_{i=1}^r$ of mutually orthogonal primitive idempotents such that $\sum_{1\leq i\leq r}p_i=e$. The cardinality $r$ of a Jordan frame is independent of the Jordan frame and is called the rank of the algebra.  
\end{defi}
\begin{prop}
  To each $x\in J$ is associated a Jordan frame $(p_i)_{i=1}^r$ and a family of real numbers $(\lambda_i)_{i=1}^r$ such  that $x=\sum_{1\leq i\leq r}\lambda_i p_i$. The $\lambda_i$ only depend (up to reordering) on $x$. We say that $x$ is regular if the $\lambda_i$ are distinct. Under those circumstances the idempotents $p_i$ are also well defined (up to reordering, the reordering being the same as the one alluded to for the $\lambda_i$). 
%Moreover $(p_i)_{i=1}^r$ and $(\lambda_i)_{i=1}^r$ are uniquely determined up to simultaneous permutation.
\end{prop}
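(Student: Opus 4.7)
The plan is to prove this spectral theorem by reducing to commutative associative algebra. The central object is the subalgebra $\R[x]$ generated by $x$ and $e$ inside $J$.

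First I would observe that $\R[x]$ is associative and commutative: although $J$ is not associative in general, the Jordan identity $a\cdot(b\cdot a^{2})=(a\cdot b)\cdot a^{2}$ implies power-associativity, so any algebra generated by a single element together with the unit is associative. Hence $\R[x]$ is a finite-dimensional real commutative associative unital algebra. Next I would use the Euclidean axiom (that $a^{2}+b^{2}=0$ forces $a=b=0$) to rule out nilpotents in $\R[x]$: if $y\in\R[x]$ satisfies $y^{m}=0$ for some $m$, an induction on $m$ combined with the Euclidean condition yields $y=0$. A finite-dimensional commutative associative real algebra with no nilpotents is semisimple, hence isomorphic to a product of fields. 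Since it is singly generated and real, one checks that complex factors cannot appear (a purely imaginary generator would produce two squares summing to zero), so $\R[x]\cong\R^{k}$ as algebras.

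From this isomorphism I would extract the decomposition: there exist pairwise orthogonal idempotents $q_{1},\ldots,q_{k}\in\R[x]$ with $\sum q_{i}=e$ and pairwise distinct reals $\mu_{1},\ldots,\mu_{k}$ such that $x=\sum\mu_{i}q_{i}$. To promote this to a Jordan frame, I would refine each $q_{i}$ into a sum of mutually orthogonal primitive idempotents by a finite induction (a non-primitive idempotent splits by definition, and dimension bounds the process). Collecting these primitive idempotents as $(p_{j})_{j=1}^{r}$ and setting $\lambda_{j}=\mu_{i}$ whenever $p_{j}$ appears in the refinement of $q_{i}$ produces the desired expression $x=\sum_{j}\lambda_{j}p_{j}$; the total count equals the rank $r$ because every Jordan frame has cardinality $r$ by the preceding definition.

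For the uniqueness statement about the $\lambda_{i}$, I would identify the distinct values among the $\lambda_{j}$ with the roots of the minimal polynomial of $x$ inside $\R[x]$, and read off their multiplicities from the dimensions of the corresponding $q_{i}\cdot J$. Since the minimal polynomial is an invariant of $x$, this determines the multiset $\{\lambda_{j}\}$ up to reordering. For the regular case, $r=k$ and each $q_{i}$ is already primitive, hence coincides with some $p_{j}$; moreover the $q_{i}$ are forced by Lagrange interpolation, $q_{i}=\prod_{\ell\neq i}(x-\mu_{\ell}e)/(\mu_{i}-\mu_{\ell})$, so the idempotents themselves are intrinsic to $x$.

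The main obstacle I anticipate is the step ensuring $\R[x]$ has no nilpotents and splits as $\R^{k}$ rather than over $\C$: this is where the Euclidean hypothesis is essential and must be unwound carefully. The refinement into primitives and the uniqueness via the minimal polynomial are comparatively routine once that structural statement is in hand.
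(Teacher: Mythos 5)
The paper offers no proof of this Proposition: it is quoted as standard background, with \cite{faraut1994analysis} as the implicit reference, so there is nothing in the text to compare you against. Your sketch is essentially the standard argument from that source (Theorems III.1.1--III.1.2): power-associativity makes $\R[x]$ a commutative associative unital algebra, the formal-reality axiom $a^2+b^2=0\Rightarrow a=b=0$ kills nilpotents and excludes complex factors so that $\R[x]\cong\R^k$, the coordinate idempotents give the coarse decomposition $x=\sum_i\mu_i q_i$ with distinct $\mu_i$, and Lagrange interpolation shows the $q_i$ (hence, in the regular case, the $p_i$ themselves) are determined by $x$. That skeleton is correct. Two steps need tightening. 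First, after refining each $q_i$ into primitive idempotents you must check that primitives coming from \emph{different} $q_i$'s are mutually orthogonal; this is true (an idempotent $p$ with $pq_i=p$ lies in the Peirce $1$-space of $q_i$, hence in the Peirce $0$-space of $q_j$ for $j\neq i$, so it multiplies to zero against anything under $q_j$), but it does not follow from the splitting procedure alone and should be said. Second, reading the multiplicity of $\mu_i$ off $\dim(q_i\cdot J)$ is the wrong invariant: $q_i\cdot J$ is the image of $L(q_i)$, which mixes the Peirce $1$- and $\tfrac12$-spaces, and relating its dimension to the number of primitives under $q_i$ requires the structure constants of the simple factors. The clean statement is that the multiplicity of $\mu_i$ equals the rank of the idempotent $q_i$ (the common cardinality of any decomposition of $q_i$ into orthogonal primitives, equivalently $\trace{q_i}$ once the trace is defined intrinsically via the generic minimal polynomial); with that substitution your uniqueness argument goes through and the proof is complete in outline.
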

Let us now give some notations
\begin{defi}
  If $x=\sum_{1\leq i\leq r}\lambda_i p_i$ is the spectral decomposition of $x\in J$ then we define $\exp(x)=\sum_{1\leq i\leq r}\exp(\lambda_i) p_i$, $\trace{x}=\sum_{1\leq i\leq r}\lambda_i$, $\|x\|=\sup_{1\leq i\leq r}|\lambda_i|$, and $|x|_{\sigma}=\sup_{i,j}|\lambda_i-\lambda_j|$. The $\lambda_i$ are the eigenvalues of $x$ and the set of the eigenvalues is called the spectrum of $x$, noted $\spec{x}$. The spectral norm of $x$ is $\|x\|$. We will also call it the JB-norm.
\end{defi}
\begin{defi}
  The set of elements of a euclidean Jordan algebra $J$ with positive eigenvalues is equal to the image of $J$ by the exponential. This is by definition the (symmetric) cone associated to the Jordan algebra $J$. 
\end{defi}
We are now ready to define a scalar product on $J$
\begin{defi}
  We define $(x,y)=\trace{xy}$ for $x$, $y\in J$.
\end{defi}
\begin{prop}
  The scalar product $(\cdot,\cdot)$ is associative, i.e. satisfies
  \begin{equation*}
    \forall x,y\in J,\ (xz,y)=(x,zy).
  \end{equation*}
\end{prop}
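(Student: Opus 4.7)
The plan is to reformulate the desired equality $\mathrm{Tr}((xz)y)=\mathrm{Tr}(x(zy))$ as the assertion that the Jordan multiplication operator $L(z)\colon w\mapsto zw$ is self-adjoint for the symmetric bilinear form $(u,v)=\mathrm{Tr}(uv)$. By linearity in $z$ and the spectral decomposition $z=\sum\zeta_i p_i$, it suffices to establish this self-adjointness when $z$ is a primitive idempotent, and thus it is enough to handle the case of an arbitrary idempotent $p$.

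For an idempotent $p$, the Jordan identity forces $L(p)$ to annihilate the polynomial $t(2t-1)(t-1)$, so $J$ decomposes as a direct sum of Peirce eigenspaces $J=J_0(p)\oplus J_{1/2}(p)\oplus J_1(p)$ on which $L(p)$ acts respectively by $0$, $1/2$, and $1$. Since $L(p)$ is diagonal in this decomposition, its self-adjointness with respect to $(\cdot,\cdot)$ reduces to pairwise orthogonality of the three Peirce subspaces.

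The orthogonality of $J_0(p)$ and $J_1(p)$ follows at once from the Peirce multiplication rule $J_0(p)\cdot J_1(p)=0$. The two remaining orthogonalities reduce, via the rules $J_\alpha(p)\cdot J_{1/2}(p)\subset J_{1/2}(p)$ for $\alpha\in\{0,1\}$, to showing $\mathrm{Tr}(v)=0$ for every $v\in J_{1/2}(p)$. For this, observe that the full Peirce rules are equivalent to the statement that $J$ carries a $\mathbf{Z}/2\mathbf{Z}$-grading with $J_0(p)\oplus J_1(p)$ in degree $0$ and $J_{1/2}(p)$ in degree $1$; consequently, the sign involution $\sigma\colon u_0+u_{1/2}+u_1\mapsto u_0-u_{1/2}+u_1$ is a Jordan algebra automorphism. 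Because Jordan automorphisms preserve the spectral decomposition and hence the trace, we obtain $\mathrm{Tr}(v)=\mathrm{Tr}(\sigma v)=-\mathrm{Tr}(v)$ for $v\in J_{1/2}(p)$, forcing $\mathrm{Tr}(v)=0$.

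The principal obstacle is not the final computation but the preliminary Peirce machinery itself: both the annihilating polynomial of $L(p)$ and the multiplication rules on the eigenspaces require careful polarization of the Jordan axiom $a\cdot(b\cdot a^2)=(a\cdot b)\cdot a^2$. These are standard results available in Faraut--Koranyi, so once they are invoked the proof reduces essentially to the signed-involution argument above.
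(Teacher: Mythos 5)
Your argument is correct. Note, however, that the paper offers no proof of this proposition at all: it is recalled as part of the standard preliminaries (it is essentially Propositions II.4.3 and III.1.5 of Faraut--Koranyi, the reference the paper points to), so there is no in-paper proof to match yours against; what you have written is a genuine derivation, self-contained modulo the Peirce machinery you invoke. Your route --- reduce by the spectral decomposition to self-adjointness of $L(p)$ for an idempotent $p$, observe that this is equivalent to pairwise trace-orthogonality of the Peirce spaces $J_0(p)$, $J_{1/2}(p)$, $J_1(p)$, dispose of the pairing of $J_0(p)$ with $J_1(p)$ via $J_0(p)\cdot J_1(p)=0$, and dispose of the rest by showing the trace vanishes on $J_{1/2}(p)$ using the sign involution (which is exactly the Peirce reflection $\quadr{2p-e}$) --- is sound and arguably more geometric than the textbook argument, which proves associativity of the form $\mathrm{Tr}\,L(xy)$ directly from the operator identity $[L(x),L(yz)]+[L(y),L(zx)]+[L(z),L(xy)]=0$ (itself a polarization of the Jordan axiom) and then identifies that form with a multiple of $\mathrm{Tr}(xy)$ on each simple factor. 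The textbook route needs no Peirce theory but does need the decomposition into simple ideals; yours needs the Peirce decomposition and multiplication rules but then closes quickly and uniformly. Two small points are worth making explicit: the eigenspace-orthogonality criterion uses that $L(p)$ is diagonalizable, which holds because its minimal polynomial divides $t(2t-1)(t-1)$, a polynomial with simple roots; and the invariance of the trace under your involution $\sigma$ rests on $\sigma(e)=e$ and on automorphisms carrying Jordan frames to Jordan frames, both immediate but worth stating.
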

\begin{defi}
  To each $x\in J$, the linear endomorphism $y\mapsto xy$ of $J$ is noted $L(x)$. To each such $x$ we associate another linear endomorphism $\quadr{x}$ of $J$ called the quadratic representation of $x$ and defined by $\quadr{x}=2L^2(x)-L(x^2)$. If $x\in\mathcal{C}$ then $\quadr{x}$ is a positive definite operator for the natural scalar product.
\end{defi}
\begin{rmk}
  Hence $x\in J$ lies in the center of $J$ if and only if $L(x)$ commutes with every $L(y)$, $y\in J$.
\end{rmk}
\begin{defi}
  A Jordan algebra is simple if its only strict ideal is the trivial ideal. A Jordan algebra is semi-simple if it is a direct sum of simple Jordan algebras.
\end{defi}
The following Proposition can be found for example in \cite{faraut1994analysis} for example
\begin{prop}
  Every semi-simple Jordan algebra decomposes uniquely as the direct sum of simple Jordan algebras. Euclidean Jordan algebras are semi-simple.
\end{prop}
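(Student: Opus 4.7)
The plan is to establish the two assertions of the proposition separately.

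\emph{Uniqueness of the decomposition.} Suppose one has two decompositions $J=\bigoplus_{i=1}^{k}S_i=\bigoplus_{j=1}^{l}T_j$ into simple ideals. Each simple summand is unital (as noted in the excerpt), so denote by $e_{T_j}$ the identity of $T_j$. For any $s\in S_i$, the product $s\cdot e_{T_j}$ lies in $S_i$ (because $S_i$ is an ideal) and in $T_j$ (for the same reason), hence in $S_i\cap T_j$. Summing over $j$ and using $e=\sum_j e_{T_j}$ yields $S_i=\sum_j(S_i\cap T_j)$. Each $S_i\cap T_j$ is an ideal of the simple algebra $S_i$, so it is either $\{0\}$ or $S_i$; and since the $T_j$ are in direct sum, at most one index $j$ can give $S_i\cap T_j=S_i$. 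Because $S_i\neq 0$ exactly one does, giving $S_i\subset T_j$, and the symmetric argument yields $S_i=T_j$. This pairing is the claimed bijection.

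\emph{Euclidean Jordan algebras are semi-simple.} I would argue by induction on the dimension, using the trace form $(x,y)=\Trace{xy}$. The euclidean hypothesis $a^{2}+b^{2}=0\Rightarrow a=b=0$ together with the spectral decomposition stated earlier shows that this form is positive-definite (since $(x,x)=\sum_i\lambda_i^2$ with respect to a Jordan frame diagonalizing $x$). The key step is to show that if $I\subset J$ is an ideal, then its orthogonal complement $I^{\perp}$ for the trace form is again an ideal. This is where the associativity of the trace form (stated in the excerpt as a proposition) is used: for $x\in I^{\perp}$, $z\in J$ and $y\in I$, the identity $(xz,y)=(x,zy)$ and the fact that $zy\in I$ give $(xz,y)=0$, so $xz\in I^{\perp}$. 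Positive-definiteness then gives the vector space splitting $J=I\oplus I^{\perp}$, and $I\cdot I^{\perp}\subset I\cap I^{\perp}=\{0\}$ upgrades this to a Jordan algebra direct sum.

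To finish, I induct on $\dim J$. If $J$ has no proper non-zero ideal it is simple. Otherwise pick such an $I$, split $J=I\oplus I^{\perp}$, and note that each summand is closed under the Jordan product and still satisfies both the Jordan identity and $a^{2}+b^{2}=0\Rightarrow a=b=0$ (these being inherited from $J$), so each summand is itself a euclidean Jordan algebra of strictly smaller dimension. The induction hypothesis expresses each summand as a direct sum of simple ideals, and concatenating the two gives the required decomposition of $J$.

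The main obstacle is verifying that $I^{\perp}$ is an ideal; everything else is either bookkeeping or an immediate consequence of the orthogonal decomposition. This is precisely why the associativity of the trace form was singled out as a proposition earlier — it is the crucial non-trivial input that converts the positive-definiteness of $(\cdot,\cdot)$ into an orthogonal decomposition by ideals.
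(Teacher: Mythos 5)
Your proof is correct. The paper does not actually prove this proposition — it only cites \cite{faraut1994analysis} — and your argument is precisely the standard one found there: positive-definiteness of the trace form together with its associativity shows that the orthogonal complement of an ideal is an ideal, which gives the decomposition by induction on the dimension, while uniqueness follows by intersecting the two families of simple summands. The only point deserving a word is in the uniqueness step: you justify unitality of the summands $T_j$ by appeal to the excerpt, but the paper's remark on unitality concerns euclidean algebras, whereas what you actually need (and what is immediate to check) is that writing the unit as $e=\sum_j f_j$ along the decomposition makes each $f_j$ a unit for $T_j$, since $t=te=\sum_k tf_k=tf_j$ for $t\in T_j$.
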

 Let us remind the reader that the simple euclidean Jordan algebras have been classified.
%  From this classification, Hirzebruch derived in \cite{hirzebruch1965jordan} the following beautiful result
% \begin{prop}
%   Two simple Jordan algebras are isomorphic if and only if their respective subsets of primitive idempotents are homeomorphic. 
% \end{prop}

Let us turn our attention to the isometry group of the Thompson metric and of the Hilbert semi-metric. Let us begin with the easy
\begin{prop}
  The automorphism group $\auto{\mathcal{C}}$ of a convex proper open cone $\mathcal{C}$ acts isometrically on $\mathcal{C}$ for both the Thompson metric and the Hilbert semi-metric. Indeed if $g\in\auto{\mathcal{C}}$ and $(P,Q)\in\mathcal{C}$ then $\M{P}{Q}=\M{g(P)}{g(Q)}$. This action is faithful in the case of the Thompson metric, and induces an isometric action on the set of rays through $\mathcal{C}$ with kernel the subgroup of positive dilatation $\{\,\lambda I_n,\ \lambda>0\,\}$ in the case of the Hilbert pseudo-metric.
\end{prop}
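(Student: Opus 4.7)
The plan is to prove the equality $\M{P}{Q}=\M{g(P)}{g(Q)}$ first, since everything else follows easily from it. Fix $g\in\auto{\mathcal{C}}$, $P,Q\in\mathcal{C}$ and $t>0$. By linearity of $g$ we have $tg(Q)-g(P)=g(tQ-P)$, and since $g$ is a bijective linear map of $\R^n$ preserving $\mathcal{C}$ (and hence sending $\mathcal{C}$ onto $\mathcal{C}$), $tQ-P\in\mathcal{C}$ if and only if $g(tQ-P)\in\mathcal{C}$. Taking infima gives $\M{P}{Q}=\M{g(P)}{g(Q)}$. Applying this symmetrically and plugging into the definitions of $\dthom{\cdot}{\cdot}$ and $\dhil{\cdot}{\cdot}$ yields that $g$ is an isometry for both.

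For faithfulness in the Thompson case, suppose $g\in\auto{\mathcal{C}}$ acts trivially, i.e.\ $g(P)=P$ for every $P\in\mathcal{C}$. Since $\mathcal{C}$ is a non-empty open subset of $\R^n$, it contains a basis of $\R^n$, so the linear map $g$ must coincide with the identity on all of $\R^n$. Hence the kernel of the action is trivial.

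For the Hilbert case, the subgroup of positive dilatations is obviously contained in the kernel (it fixes every ray pointwise). Conversely, suppose $g\in\auto{\mathcal{C}}$ preserves every ray through $\mathcal{C}$. Then for each $P\in\mathcal{C}$ there exists $\lambda(P)>0$ with $g(P)=\lambda(P)P$. Note that $\lambda(tP)=\lambda(P)$ for all $t>0$, so $\lambda$ is in fact a function on rays. The main step is to use linearity to show $\lambda$ is constant: pick any two linearly independent $P,Q\in\mathcal{C}$ (possible since $\mathcal{C}$ is open and has dimension at least $2$ in the non-trivial case, the one-dimensional case being immediate). Then
\begin{equation*}
\lambda(P+Q)(P+Q)=g(P+Q)=g(P)+g(Q)=\lambda(P)P+\lambda(Q)Q,
\end{equation*}
and linear independence forces $\lambda(P)=\lambda(Q)=\lambda(P+Q)$. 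Since $\mathcal{C}$ is open and connected this yields a single constant $\lambda>0$ with $g=\lambda I_n$.

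The proof is essentially bookkeeping; the only mildly subtle point is the last linear-independence argument for the Hilbert kernel, and even that is standard. No obstacle of substance is expected.
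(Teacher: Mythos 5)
Your proof is correct and takes the only natural route: the paper itself gives no separate proof of this proposition beyond the inline assertion $\M{P}{Q}=\M{g(P)}{g(Q)}$, which is exactly the identity you establish (via $tg(Q)-g(P)=g(tQ-P)$ and $g(\mathcal{C})=\mathcal{C}$) and from which the isometry claims follow. Your two kernel computations --- the spanning argument for faithfulness in the Thompson case and the linear-independence argument identifying the Hilbert kernel with the positive dilatations --- are the standard details the paper leaves implicit, and they are carried out correctly.
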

We will see that this group is not always the full isometry group $\isom{\mathcal{C}}$ for the chosen metric (Hilbert or Thompson), but that it is always a subgroup of finite index of it. 

Every cone associated to a euclidean Jordan algebra also carries a Riemannian symmetric structure of negative Ricci curvature. For convenience we remind the definition of its first fondamental form
\begin{defi}
  The scalar product $\langle \cdot,\cdot\rangle_{x}$ at $x\in\mathcal{C}$ is given by (as usual we identify the tangent space at $x$ with the vector space obtained by forgetting the algebra structure of $J$)
  \begin{align*}
    \forall u,v\in J,\ \langle u,v\rangle_{x}=(\quadr{x}^{-1}u,v)=(\quadr{x}^{-1/2}u,\quadr{x}^{-1/2}v).
  \end{align*}
  This Riemannian structure is complete (locally symmetric manifolds are always complete) and we note $\driem{\cdot}{\cdot}$ the associated metric. We write $i_x$ for the geodesic inversion at $x\in\mathcal{C}$ for this Riemannian structure.
\end{defi}
This Riemannian structure is non-positively curved and simply-connected. In other words it is a Hadamard manifold. Consequently there is exactly one geodesic joining any two points and one can hence define the midpoint of such a pair. Remark that this is in sharp contrast to what happens for both the Hilbert and the Thompson metric. Indeed, putting aside the trivial case where $\mathcal{C}$ is reduced to a half line, the Thompson metric is never locally uniquely geodesic, and the Hilbert metric is locally uniquely geodesic if and only if it is isometric to the model space of constant curvature $-1$.
\begin{defi}
  The midpoint of two points $a$ and $b$ for the Riemannian metric associated to $J$ is written $a\# b$. In fact we have
  \begin{align*}
  a\# b=\quadr{a^{1/2}}\left((\quadr{a^{-1/2}}b)^{1/2}\right)
  \end{align*}
  and $a\# b$  is the unique solution of  $\quadr{x}(a^{-1})=b$. 
\end{defi}
% The set of positive definite real matrices of fixed size $n>0$ is naturally a symmetric space with non positive curvature and so every two positive definite matrices $P$ and $Q$ of size $n$ have a well defined midpoint, also noted $P\#Q$. 
% \begin{prop}
%   The quadratic representation preserves midpoints, that is for every $(a,b)\in\mathcal{C}$ we have
%   \begin{equation*}
%     \quadr{a\# b}=\quadr{a}\#\quadr{b}
%   \end{equation*}
% \end{prop}
% It is well known that the subset of matrices with unit determinant is a totally geodesic submanifold of the set of all symmetric positive definite matrices of fixed size $n$ and so is midpoint-stable. The above Proposition then shows that the set of $a\in\mathcal{C}$ such that $\det{\quadr{a}}=1$ must also be midpoint-stable.
For every $u\in J$, $\det(\exp(u))=\exp(\trace{u})$. But then the set of points $a\in\mathcal{C}$ such that $\det{a}=1$ is the image by the exponential of the kernel of the linear form $u\mapsto \Trace{u}$ ().
% $P(\exp(u))=\exp(2L(u))$ and so $\det P(\exp(u))=\exp(2\Trace{L(u)})$. But then the set of points $a\in\mathcal{C}$ such that $\det{\quadr{a}}=1$ is the image by the exponential of the kernel of the linear form $u\mapsto \Trace{L(u)}$.
\begin{defi}
  Let $J_0$ be the set of $u\in J$ satisfying $\Trace{u}=0$. Then $J_0$ is a linear subspace of $J$ but not a subalgebra in general.
\end{defi}
\begin{defi}
  Let $\mathcal{C}_0$ be the image of $J_0$ by the exponential map. Then $\mathcal{C}_0$ is the set of all $a\in\mathcal{C}$ such that $\det{a}=1$. It is also also a global section of the projectification of $\mathcal{C}$ (because the determinant is positive on $\mathcal{C}$)
 % $\quadr{x}$ is quadratic i.e. homogeneous of degree two in $x$). Hence the restriction of the Hilbert pseudo-metric to $\mathcal{C}_0$ is a genuine metric (i.e. is definite).
\end{defi}
Let us remind the expression of the Riemannian geodesics starting at $e$
\begin{prop}
  The constant speed geodesics starting at $e$ are exactly the curves of the form $t\mapsto\exp(tu)$ with $u\in J^*$. The speed of this geodesic it precisely $\Trace{u^2}^{1/2}$, i.e. the square root of the sum of the squares of the eigenvalues of $u$ with multiplicities.
\end{prop}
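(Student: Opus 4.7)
The plan is to exhibit $\gamma_u(t) := \exp(tu)$ as a constant-speed Riemannian geodesic with $\gamma_u(0)=e$ and $\dot\gamma_u(0)=u$; since $(\mathcal{C},d_R)$ is a Hadamard manifold (as recalled above), uniqueness of geodesics from initial data then implies these are \emph{all} constant-speed geodesics through $e$.

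I would identify $\gamma_u$ as the orbit of a one-parameter subgroup of Riemannian isometries via the symmetric-space structure of $(\mathcal{C},d_R)$. The identity component $G:=\auto{\mathcal{C}}^0$ acts transitively by Riemannian isometries (the metric is canonical from the Jordan data, which is $G$-invariant), and the isotropy $K$ at $e$ consists of the elements preserving the trace form. The Lie algebra decomposes as $\mathfrak{g} = \mathrm{Der}(J) \oplus L(J)$, where $\mathrm{Der}(J)$ is the skew part and $L(J)$ the symmetric part for the trace form (each $L(v)$ is self-adjoint by the associativity of the trace form stated earlier in the excerpt). By the general theory of Riemannian symmetric spaces, the geodesics through $e$ are precisely the curves $t\mapsto\exp_G(tX)(e)$ for $X\in\mathfrak{p}=L(J)$. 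Specializing to $X=L(u)$, power-associativity within the subalgebra generated by $u$ gives $L(u)^k(e)=u^k$, whence
\begin{equation*}
\exp_G(tL(u))(e) = \sum_{k\ge 0}\frac{t^k u^k}{k!} = \exp(tu) = \gamma_u(t),
\end{equation*}
matching the Jordan exponential and completing the identification.

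For the speed, I differentiate termwise inside the associative subalgebra generated by $u$ to get $\dot\gamma_u(t) = u\cdot\exp(tu)$. Since $u$ and $\exp(tu)$ lie in a common associative subalgebra, $\quadr{\exp(tu)}(y)=\exp(2tu)\cdot y$ on that subalgebra, hence $\quadr{\exp(tu)}^{-1}\dot\gamma_u(t) = u\exp(-tu)$. Consequently
\begin{equation*}
\langle\dot\gamma_u(t),\dot\gamma_u(t)\rangle_{\gamma_u(t)} = \bigl(u\exp(-tu),\,u\exp(tu)\bigr) = \trace{u^2},
\end{equation*}
by associativity of the trace form together with $\exp(-tu)\exp(tu)=e$, giving the constant speed $\trace{u^2}^{1/2}$.

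The main obstacle is invoking the symmetric-space structure cleanly, in particular the identification of the Cartan decomposition $\mathfrak{g}=\mathrm{Der}(J)\oplus L(J)$ and the verification that $G$ acts by Riemannian isometries. These are classical results in Jordan theory (cf.\ \cite{faraut1994analysis}) but non-trivial. A more hands-on alternative is to set $g_t:=\quadr{\exp(tu/2)}$ and check directly that $g_t(e)=\exp(tu)$ via $\quadr{a}(e)=a^2$, that $g_t$ is a Riemannian isometry using the fundamental formula $\quadr{\quadr{a}x}=\quadr{a}\quadr{x}\quadr{a}$ together with self-adjointness of $\quadr{a}$, and that $g_t$ realizes the transvection shifting $\gamma_u$ by $t$. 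The delicate point in this alternative is the one-parameter-group property $g_{t+s}=g_tg_s$, which does not follow from commutation alone and requires invoking the identification with $\exp_G(tL(u))$ along the orbit.
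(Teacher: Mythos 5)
Your argument is correct, but note that the paper offers no proof of this Proposition at all: it is introduced with ``Let us remind the expression of the Riemannian geodesics starting at $e$'' and is quoted as a standard fact from the theory of symmetric cones (cf.\ \cite{faraut1994analysis}), so your write-up is a reconstruction of that classical argument rather than a parallel to anything in the text. The reconstruction is sound: the Cartan decomposition $\mathfrak{g}=\mathrm{Der}(J)\oplus L(J)$ with $\mathfrak{k}=\mathrm{Der}(J)$ skew and $\mathfrak{p}=L(J)$ self-adjoint for the trace form, the identification $\exp_G(tL(u))(e)=\exp(tu)$ via $L(u)^k(e)=u^k$ (power-associativity of the subalgebra generated by $u$), and the computation $\quadr{\exp(tu)}^{-1}\bigl(u\exp(tu)\bigr)=u\exp(-tu)$ inside that associative subalgebra, which together with associativity of the trace form gives the constant speed $\trace{u^2}^{1/2}$, are all correct. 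Two small remarks. First, uniqueness of a geodesic with prescribed initial point and velocity is plain ODE uniqueness and needs neither the Hadamard property nor completeness; completeness is only relevant to knowing the curves are defined for all $t$, which here is manifest since $\exp(tu)\in\mathcal{C}$ for every $t$. Second, you are right to flag the one-parameter-group property of $t\mapsto\quadr{\exp(tu/2)}$ as the delicate point of the ``hands-on'' variant; your primary route through $\exp_G(tL(u))$ avoids it cleanly, and it also silently disposes of the slightly odd notation $u\in J^*$ in the statement, since the correspondence $u\mapsto\gamma_u$ is simply the bijection between initial velocities in $T_e\mathcal{C}=J$ (nonzero, if one insists on nonconstant geodesics) and constant speed geodesics issued from $e$.
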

We already introduced the geometric mean in general euclidean Jordan algebras. We now introduce a new mean called the spectral mean (see \cite{kim2006jordan} )
\begin{defi}
 The spectral mean $\specmean{a}{b}$ of $(a,b)\in\mathcal{C}$ is $\quadr{a^{-1}\# b}^{1/2}a$. It is the unique solution in $J$ of the equation
 \begin{equation*}
   (a^{-1}\# b)^{1/2}=a^{-1}\# x.
 \end{equation*}
\end{defi}
Let us introduce a new concept before stating the next Proposition
\begin{defi}
  Two elements $a$ and $b$ of $J$ are simultaneously diagonalisable if they are diagonal in the same Jordan frame i.e. if for some Jordan frame $(e_i)_{1\leq i\leq r}$ there exists $(\lambda_i)_{1\leq i\leq r}\in\R^r$ and $(\mu_i)_{1\leq i\leq r}\in\R^r$ such that $a=\sum_{1\leq i\leq r}\lambda_i e_i$ and $b=\sum_{1\leq i\leq r}\mu_i e_i$.
\end{defi}
\begin{rmk}
  Two simultaneously diagonalisable primitive idempotents are obviously either equal or orthogonal.
\end{rmk}
The following proposition is proved in \cite{kim2006jordan}
\begin{prop}
  For $a$, $b\in J$ the following three conditions are equivalent
  \begin{itemize}
  \item $a$ and $b$ are simultaneously diagonalisable,
  \item $\exp(a)$ and $\exp(b)$ are simultaneously diagonalisable,
  \item the geometric mean and the spectral mean of $\exp(a)$ and $\exp(b)$ are equal.
  \end{itemize}
\end{prop}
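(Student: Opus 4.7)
The strategy is to handle the easy equivalence $(1)\Leftrightarrow(2)$ and the implication $(1)\Rightarrow(3)$ by direct computation in a common Jordan frame, and then to devote the bulk of the argument to the rigidity implication $(3)\Rightarrow(1)$.

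\emph{Equivalence $(1)\Leftrightarrow(2)$.} The exponential $\exp\colon J\to\mathcal{C}$ is a bijection defined on any spectral decomposition by $\exp\bigl(\sum_i\lambda_ip_i\bigr)=\sum_i e^{\lambda_i}p_i$; its inverse $\log\colon\mathcal{C}\to J$ is obtained by the same formula applied to the spectral decomposition of an element of $\mathcal{C}$. Hence a family $(p_i)$ is a common Jordan frame for $a,b$ if and only if it is a common Jordan frame for $\exp(a),\exp(b)$.

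\emph{Implication $(1)\Rightarrow(3)$.} Writing $A=\sum_i e^{\lambda_i}p_i$ and $B=\sum_i e^{\mu_i}p_i$ in the common frame $(p_i)_{1\leq i\leq r}$, every quantity entering the formulas $A\# B=\quadr{A^{1/2}}\bigl(\quadr{A^{-1/2}}B\bigr)^{1/2}$ and $\specmean{A}{B}=\quadr{(A^{-1}\# B)^{1/2}}A$ lies in the commutative associative subalgebra $\R p_1\oplus\cdots\oplus\R p_r$, on which $\quadr{x}$ reduces to multiplication by $x^2$. Direct substitution then shows that both means coincide with $\sum_i e^{(\lambda_i+\mu_i)/2}p_i$.

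\emph{Implication $(3)\Rightarrow(1)$.} This is the essential content. Combining the characterisation of the spectral mean as the unique solution of $(A^{-1}\# B)^{1/2}=A^{-1}\#\specmean{A}{B}$ with the hypothesis $A\# B=\specmean{A}{B}$ rewrites the equality of means as the identity
\begin{equation*}
(A^{-1}\# B)^{1/2}=A^{-1}\#(A\# B).
\end{equation*}
I would unfold both sides using the defining formula $X\# Y=\quadr{X^{1/2}}\bigl(\quadr{X^{-1/2}}Y\bigr)^{1/2}$ together with the fundamental formula $\quadr{\quadr{x}y}=\quadr{x}\quadr{y}\quadr{x}$, and exploit the covariance of the geometric mean under the structure group of $\mathcal{C}$ (i.e.\ $\quadr{g}(X\# Y)=\quadr{g}X\#\quadr{g}Y$) to put the identity in a normal form. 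The target is to deduce that $L(a)$ and $L(b)$ commute as operators on $J$: by a standard result on euclidean Jordan algebras (see \cite{faraut1994analysis}), this is precisely the condition for simultaneous diagonalisability of $a$ and $b$ in a common Jordan frame.

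\emph{Main obstacle.} The difficulty lies entirely in the step $(3)\Rightarrow(1)$. The asymmetry that makes it nontrivial is exactly that the geometric mean is covariant under the structure group but the spectral mean is not, so extracting operator commutativity from the rigidity identity requires a delicate algebraic manipulation. A natural route, following \cite{kim2006jordan}, is via spectral comparison: the spectrum of $\specmean{A}{B}$ is controlled (up to ordering) by the spectra of $A$ and $B$, while the spectrum of $A\# B$ satisfies only a log-majorisation inequality that saturates precisely when $A$ and $B$ share a common diagonalising frame.
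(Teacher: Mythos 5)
Your treatment of $(1)\Leftrightarrow(2)$ and of $(1)\Rightarrow(3)$ is correct and complete: the exponential acts coefficient-wise on a spectral decomposition, and in the associative subalgebra $\R p_1\oplus\cdots\oplus\R p_r$ generated by a common frame one indeed has $\quadr{x}y=x^2y$, so both means reduce to $\sum_i e^{(\lambda_i+\mu_i)/2}p_i$. Note for context that the paper does not prove this proposition at all; it is quoted from \cite{kim2006jordan}, so there is no internal argument to compare yours against --- which makes it all the more important that your write-up actually close the hard direction.

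It does not. Your $(3)\Rightarrow(1)$ is a plan, not a proof: ``I would unfold both sides\dots'', ``the target is to deduce that $L(a)$ and $L(b)$ commute\dots'', ``a natural route is via spectral comparison\dots''. No identity is actually derived from the equation $(A^{-1}\# B)^{1/2}=A^{-1}\#(A\# B)$, and no mechanism is given that produces the operator commutativity $L(a)L(b)=L(b)L(a)$ (which, by the standard lemma in \cite{faraut1994analysis}, is indeed equivalent to simultaneous diagonalisability --- that part of your target is correctly identified). Worse, the one concrete tool you invoke, namely that the spectrum of $A\# B$ obeys a log-majorisation ``that saturates precisely when $A$ and $B$ share a common diagonalising frame'', is an assertion at least as strong as the implication you are trying to prove; citing it as a ``natural route'' without proof makes the argument circular. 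As it stands the essential content of the proposition --- which you yourself flag as ``the essential content'' --- is missing, and the proposal cannot be accepted as a proof. Either carry out the algebraic manipulation you announce (unfolding both sides with the fundamental formula $\quadr{\quadr{x}y}=\quadr{x}\quadr{y}\quadr{x}$ and the covariance of $\#$, and exhibiting the commutation relation), or explicitly defer this direction to \cite{kim2006jordan} as the paper does.
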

\section{More on the Hilbert and Thompson metrics}

In \cite{MR1636922} and \cite{MR2529894} the expression of the Hilbert and Thompson metrics for the simple euclidean Jordan algebra of complex hermitian matrices is derived. The computations work in full generality and we include a proof for the ease of the reader
\begin{prop}
  Let us consider the cone associated to a euclidean Jordan algebra $J$. Its associated Hilbert metric $d_H$ and Thompson metric $d_T$ are given by
  \begin{align*}
    \dhil{a}{b}&=\diam\log\spec\left(\quadr{a^{-1/2}}b\right),& \dthom{a}{b}&=\|\log\left(\quadr{a^{-1/2}}b\right)\|,
  \end{align*}
where $\|\cdot\|$ is the spectral norm (that is the JB-norm).
\end{prop}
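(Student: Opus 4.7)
The plan is to reduce both formulas to the case $a = e$ using the invariance of the metrics under the quadratic representation, and then read off the answers from the spectral decomposition.

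First I would exploit the fundamental Jordan identities: $\quadr{a^{-1/2}}$ lies in $\auto{\mathcal{C}}$ (the quadratic representations of elements of $\mathcal{C}$ are cone automorphisms), and $\quadr{a^{-1/2}}a = e$ (a standard consequence of $\quadr{a^{1/2}}e = a$ together with $\quadr{a^{-1/2}} = \quadr{a^{1/2}}^{-1}$). By the preceding proposition on isometric actions of $\auto{\mathcal{C}}$, we have
\begin{align*}
\dthom{a}{b} &= \dthom{e}{\quadr{a^{-1/2}}b}, &
\dhil{a}{b} &= \dhil{e}{\quadr{a^{-1/2}}b}.
\end{align*}
Setting $c = \quadr{a^{-1/2}}b \in \mathcal{C}$, it then suffices to show
\begin{align*}
\dhil{e}{c} &= \diam \log \spec(c), &
\dthom{e}{c} &= \|\log c\|.
\end{align*}

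Next I would compute $\M{e}{c}$ and $\M{c}{e}$ directly from the spectral decomposition. Choose a Jordan frame $(p_i)_{i=1}^r$ diagonalising $c$, so $c = \sum_i \lambda_i p_i$ with $\lambda_i > 0$ (since $c \in \mathcal{C}$), and $e = \sum_i p_i$. Then
\begin{equation*}
tc - e = \sum_{i=1}^r (t\lambda_i - 1) p_i, \qquad te - c = \sum_{i=1}^r (t - \lambda_i) p_i.
\end{equation*}
Because an element written in a Jordan frame lies in $\mathcal{C}$ if and only if all its coordinates are strictly positive, we immediately read off
\begin{equation*}
\M{e}{c} = \frac{1}{\min_i \lambda_i}, \qquad \M{c}{e} = \max_i \lambda_i.
\end{equation*}

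Finally I would combine these identities with the definitions of the two metrics:
\begin{align*}
\dhil{e}{c} &= \log \bigl(\M{e}{c}\,\M{c}{e}\bigr) = \log \frac{\max_i \lambda_i}{\min_i \lambda_i} = \max_i \log \lambda_i - \min_i \log \lambda_i,
\end{align*}
which is exactly $\diam \log \spec(c)$, and
\begin{align*}
\dthom{e}{c} &= \log \max\bigl(\M{e}{c}, \M{c}{e}\bigr) = \max\bigl(-\log \min_i \lambda_i,\ \log \max_i \lambda_i\bigr) = \max_i |\log \lambda_i|,
\end{align*}
which is $\|\log c\|$ by definition of the JB-norm.

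The main obstacle, if any, is the bookkeeping around the fundamental identities $\quadr{a^{-1/2}} \in \auto{\mathcal{C}}$ and $\quadr{a^{-1/2}}a = e$; both are classical results on symmetric cones (see \cite{faraut1994analysis}) and I would simply cite them. Once those are invoked, the computation reduces to a transparent spectral calculation that makes the shape of both formulas self-evident.
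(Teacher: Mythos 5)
Your proof is correct and follows essentially the same route as the paper: both reduce the computation via the quadratic representation (the paper substitutes $\quadr{b^{-1/2}}$ directly inside the definition of $\M{\cdot}{\cdot}$, while you invoke the isometric action of $\auto{\mathcal{C}}$ to move the base point to $e$, which is the same idea) and then read off $\M{e}{c}$ and $\M{c}{e}$ from the spectral decomposition. The only cosmetic difference is that the paper's computation lands on $\quadr{b^{-1/2}}a$ rather than $\quadr{a^{-1/2}}b$, the two being interchangeable by the symmetry of the resulting expressions.
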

\begin{proof}
Remember that if $a\in\mathcal{C}$ then $P(a)$ preserves $\mathcal{C}$, so that
    \begin{align*}
    \log \M{a}{b}&=\log\inf\{\,t>0\mid tb-a\in\pp\,\}\\
    &=\log\inf\{\,t>0\mid t-\quadr{b^{-1/2}}a\in\pp\,\}\\
    &=\log\sup\spec{\quadr{b^{-1/2}}a},
  \end{align*}
  and similarly 
  \begin{align*}
    \log \M{b}{a}&=\log\inf\{\,t>0\mid ta-b\in\pp\,\}\\
    &=\log 1/\sup\{\,t>0\mid a-tb\in\pp\,\}\\
    &=\log 1/\sup\{\,t>0\mid \quadr{b^{-1/2}}(a)-t\in\pp\,\}\\
    &=-\log \inf\spec\quadr{b^{-1/2}}(a).
  \end{align*}
The proposition is a direct consequence of those computations.
\end{proof}
\begin{prop}
  The constant speed geodesics for the Riemannian metric  on $\mathcal{C}$ (resp. on $\mathcal{C}_0$) are constant speed geodesics for the Thompson metric (resp. for the Hilbert metric). Consequently the Riemannian midpoints are also midpoints for those two other metrics.
\end{prop}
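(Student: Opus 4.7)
The plan is to reduce to the case of Riemannian geodesics based at the identity $e$ using the transitive isometric action of $\auto{\mathcal{C}}$, and then perform an explicit computation using the formula from the preceding proposition.

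First, I would note that $\auto{\mathcal{C}}$ acts transitively on $\mathcal{C}$ and by isometries for each of the three structures (Riemannian, Thompson, Hilbert). Consequently, it is enough to treat Riemannian geodesics through $e$, which by the Proposition recalled above are the curves $\gamma(t)=\exp(tu)$ for $u\in J$. For the statement about $\mathcal{C}_0$, the geodesics of the restricted Riemannian metric through $e$ correspond to choosing $u\in J_0$, since $t\mapsto\exp(tu)$ lies in $\mathcal{C}_0$ exactly when $\Trace{u}=0$.

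Next, I would exploit the spectral decomposition $u=\sum_{i=1}^{r}\alpha_i p_i$: the elements $\gamma(s)=\exp(su)$ and $\gamma(t)=\exp(tu)$ are simultaneously diagonalisable in the Jordan frame $(p_i)$. A direct computation from $\quadr{x}=2L^{2}(x)-L(x^{2})$, combined with $p_ip_j=\delta_{ij}p_i$, gives $\quadr{a}y=\sum_i\lambda_i^{2}\mu_i p_i$ whenever $a=\sum_i\lambda_ip_i$ and $y=\sum_i\mu_ip_i$. Applied to our geodesic this yields
\begin{equation*}
\quadr{\gamma(s)^{-1/2}}\gamma(t)=\sum_{i=1}^{r}e^{(t-s)\alpha_i}p_i=\exp((t-s)u),
\end{equation*}
so that $\log\quadr{\gamma(s)^{-1/2}}\gamma(t)=(t-s)u$. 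Substituting into the formulas of the previous proposition,
\begin{align*}
\dthom{\gamma(s)}{\gamma(t)}&=|t-s|\,\|u\|,\\
\dhil{\gamma(s)}{\gamma(t)}&=|t-s|\,|u|_{\sigma}.
\end{align*}
Both expressions are affine in $|t-s|$, so $\gamma$ is a constant speed geodesic for $\dthom{\cdot}{\cdot}$, and, when $u\in J_0$, also for $\dhil{\cdot}{\cdot}$ on $\mathcal{C}_0$.

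The midpoint statement is then immediate: the Riemannian midpoint $a\# b$ sits at parameter $1/2$ on the Riemannian geodesic from $a$ to $b$, hence the linearity in $|t-s|$ just proved gives $\dthom{a}{a\#b}=\dthom{a\#b}{b}=\dthom{a}{b}/2$ (and likewise for $\dhil{\cdot}{\cdot}$ when $a,b\in\mathcal{C}_0$). The only nontrivial step is the identity $\quadr{\gamma(s)^{-1/2}}\gamma(t)=\exp((t-s)u)$, which rests on the diagonal action of the quadratic representation on simultaneously diagonalisable elements; the rest is bookkeeping and invocation of the automorphism group.
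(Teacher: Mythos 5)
Your proposal is correct and follows essentially the same route as the paper: reduce to geodesics through $e$ via the transitive isometric action, then compute $\dthom{\exp(su)}{\exp(tu)}$ and $\dhil{\exp(su)}{\exp(tu)}$ from the spectral formulas of the preceding proposition. The only difference is that you spell out the identity $\quadr{\exp(-su/2)}\exp(tu)=\exp((t-s)u)$ via the diagonal action of the quadratic representation, a step the paper uses implicitly.
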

\begin{proof}
  Since the isometry group acts transitively for the three metrics we can concentrate on the Riemannian geodesics emanating from the identity. If $c:t\mapsto\exp(tu)$, $u\in J$, is a such a geodesic then for every $s$, $t\in\R$.
\begin{align*}
  \dhil{\exp(su)}{\exp(tu)}&=\diam{\log{\spec{\exp((t-s)u)}}}\\
  &=\diam{\spec{(t-s)u)}}\\
  &=|t-s|\diam{\spec{u}}.
\end{align*}
Similarly
\begin{align*}
  \dthom{\exp(su)}{\exp(tu)}&=\|\log{\exp((t-s)u)}\|\\
  &=\|(t-s)u)\|\\
  &=|t-s|\|u\|.
\end{align*}
\end{proof}
The Riemannian geodesics are not only geodesics for those two other metrics but even play a special role among all the geodesics as we shall see in the next section.  The reason why it is so is basically the following lemma
\begin{lemma}
  The Riemannian geodesic inversions are isometries for both the Thompson and the Hilbert metrics.   
\end{lemma}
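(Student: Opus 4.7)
The plan is to reduce the statement to the special case of the geodesic inversion at the identity $e$, and then to apply a standard identity in Jordan algebras relating the quadratic representation and the Jordan inverse. Since $\mathcal{C}$ is symmetric, $\auto{\mathcal{C}}$ acts transitively on $\mathcal{C}$; concretely $\quadr{x^{1/2}}(e) = x$, so $\quadr{x^{1/2}}\in\auto{\mathcal{C}}$ sends $e$ to $x$. Every element of $\auto{\mathcal{C}}$ is a Riemannian isometry, and conjugating the geodesic inversion at $e$ by such an isometry produces the geodesic inversion at the image point, whence $i_x = \quadr{x^{1/2}} \circ i_e \circ \quadr{x^{-1/2}}$. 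Combining this with the fact that $\auto{\mathcal{C}}$ acts isometrically for both $\dthom{\cdot}{\cdot}$ and $\dhil{\cdot}{\cdot}$ (previous proposition), the lemma reduces to showing that $i_e$ is an isometry for the two metrics.

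Next I identify $i_e$ explicitly. The Riemannian geodesics through $e$ are $t\mapsto\exp(tu)$, $u\in J$; the geodesic inversion at $e$ reverses the parameter along each of them, so $i_e(\exp(u)) = \exp(-u) = \exp(u)^{-1}$, i.e.\ $i_e\colon a\mapsto a^{-1}$. The key Jordan-algebraic fact I will need is the identity
$$\bigl(\quadr{a}\,b\bigr)^{-1} = \quadr{a^{-1}}\bigl(b^{-1}\bigr), \qquad a,b\in\mathcal{C},$$
which follows from the fundamental formula $\quadr{\quadr{a}b} = \quadr{a}\,\quadr{b}\,\quadr{a}$ together with the identities $\quadr{x}(x^{-1}) = x$ and $\quadr{x^{-1}} = \quadr{x}^{-1}$. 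Substituting $a$ by $a^{-1/2}$ gives $(\quadr{a^{-1/2}}\,b)^{-1} = \quadr{a^{1/2}}(b^{-1})$.

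To conclude, note that the spectrum of the Jordan inverse of an element is the reciprocal of its spectrum, so using the formulas of the previous proposition applied to the pair $(a^{-1},b^{-1})$ we obtain
$$\log\spec\bigl(\quadr{(a^{-1})^{-1/2}}\,b^{-1}\bigr) = \log\spec\bigl(\quadr{a^{1/2}}\,b^{-1}\bigr) = -\log\spec\bigl(\quadr{a^{-1/2}}\,b\bigr).$$
Both the diameter of a subset of $\R$ and the spectral norm $\|\cdot\|$ are invariant under $x\mapsto -x$, which immediately gives $\dhil{a^{-1}}{b^{-1}} = \dhil{a}{b}$ and $\dthom{a^{-1}}{b^{-1}} = \dthom{a}{b}$. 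The main obstacle in the argument is really just recalling and verifying the identity $(\quadr{a}b)^{-1} = \quadr{a^{-1}}(b^{-1})$; once this is in hand, the remainder is a direct application of the explicit formulas of the preceding proposition and the homogeneity provided by $\auto{\mathcal{C}}$.
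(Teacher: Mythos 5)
Your proof is correct and follows the same outline as the paper's: reduce by transitivity of $\auto{\mathcal{C}}$ (via the maps $\quadr{x^{1/2}}$) to the geodesic inversion at $e$, and identify that inversion with the Jordan inverse $a\mapsto a^{-1}$. The only difference lies in the final verification: the paper invokes the elementary identity $\M{a}{b}=\M{b^{-1}}{a^{-1}}$ and stops there, whereas you deduce the invariance from the spectral formulas of the preceding proposition together with the identity $(\quadr{a}b)^{-1}=\quadr{a^{-1}}(b^{-1})$ coming from the fundamental formula --- both routes are valid, and yours has the merit of writing out explicitly the computation the paper leaves to the reader.
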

\begin{proof}
  Since we already found a transitive isometry common to the three metrics it is enough to prove that the geodesic inversion at $e$ is an isometry. But the Riemannian inversion at the identity is just the algebra inversion $a\mapsto a^{-1}$. But one easily proves that $\M{a}{b}=\M{b^{-1}}{a^{-1}}$ for $a$ and $b$ in $\mathcal{C}$. 
\end{proof}
We now carry out a construction that we shall need later. For $\lambda>0$ let us define
\begin{equation*}
  \begin{array}{rcl}
    \Phi_{\lambda}:J&\rightarrow&\mathcal{C}\\
    u&\mapsto&\exp(\lambda u).
  \end{array}
\end{equation*}
Then $\Phi_{\lambda}$ is a homeomorphism. Using $\Phi_{\lambda}$ it is possible to push back the metric $d_T/\lambda$ from $\mathcal{C}$ to $J$. We call $\dthomlam{\lambda}{\cdot}{\cdot}$ this metric. Using the double restriction $(\Phi_{\lambda})_{|J_0}^{|\mathcal{C}_0}$ we can do the same with the Hilbert metric and construct a metric $\dhillam{\lambda}{\cdot}{\cdot}$ on $J_0$. It so happens that those metrics converge to norms when $\lambda$ converges to $0$. To prove this we will need the following Lemmas
\begin{lemma}
For $(u,v)\in J^2$ we have
  \begin{align*}
    \quadr{\exp(-tu/2)}\exp(tv)=e+t(v-u)+o(t).
  \end{align*}
\end{lemma}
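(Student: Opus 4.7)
The plan is to Taylor expand each factor to first order at $t=0$ and multiply out, discarding $o(t)$ terms. The only slight subtlety is expanding $\quadr{\cdot}$ around $e$, but since $\quadr{x}=2L(x)^2-L(x^2)$ is a degree-two polynomial in $x$, this expansion is exact.

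First I would note the identity
\begin{equation*}
  \quadr{e+s} = I + 2L(s) + \quadr{s}
\end{equation*}
valid for every $s\in J$. Indeed, using $L(e)=I$ and $(e+s)^2 = e + 2s + s^2$, one computes
\begin{equation*}
  \quadr{e+s} = 2(I + L(s))^2 - (I + 2L(s) + L(s^2)) = I + 2L(s) + (2L(s)^2 - L(s^2)),
\end{equation*}
and the parenthesized term is $\quadr{s}$.

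Next I would apply this with $s = \exp(-tu/2) - e$. From the defining series $\exp(-tu/2) = e - \tfrac{t}{2}u + O(t^2)$ one gets $s = -\tfrac{t}{2}u + O(t^2)$, hence $L(s) = -\tfrac{t}{2}L(u) + O(t^2)$ and $\quadr{s} = O(t^2)$. Plugging into the identity above yields
\begin{equation*}
  \quadr{\exp(-tu/2)} = I - tL(u) + O(t^2).
\end{equation*}
Similarly $\exp(tv) = e + tv + O(t^2)$.

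Finally I would multiply the two expansions and use $L(u)e = ue = u$:
\begin{equation*}
  \quadr{\exp(-tu/2)}\exp(tv) = \bigl(I - tL(u) + O(t^2)\bigr)\bigl(e + tv + O(t^2)\bigr) = e + t(v-u) + o(t),
\end{equation*}
which is the desired formula. There is no real obstacle here; the only thing worth emphasizing is the exact quadratic expansion of $\quadr{\cdot}$ around $e$, which makes the computation immediate and avoids any appeal to differential calculus on $\auto{\mathcal{C}}$.
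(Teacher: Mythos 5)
Your proof is correct and follows essentially the same route as the paper: both amount to a first-order Taylor expansion of $t\mapsto \quadr{\exp(-tu/2)}\exp(tv)$ at $t=0$. The only difference is cosmetic: where the paper invokes the differential of $(a,b)\mapsto \quadr{a}b$ at $(e,e)$ via the polarized quadratic representation, you make that derivative explicit through the exact identity $\quadr{e+s}=I+2L(s)+\quadr{s}$, which is a clean, self-contained way of carrying out the same computation.
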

\begin{proof}
  The map $(a,b)\mapsto P(a)b$ is differentiable and its differential at $(e,e)$ is $(x,y)\mapsto 2P(e,x)e+P(e)y=2x+y$. But then the differential of $t\mapsto P(\exp(-tu/2))\exp(tv)$ at $0$ is $2(-u/2)+v=v-u$. Since $P(e)e=e$ the Lemma is proved.
\end{proof}
\begin{lemma}
  The spectrum is continuous on any Jordan algebra.
\end{lemma}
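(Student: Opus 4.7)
My plan is to reduce the continuity of the spectrum to the classical fact that the roots of a monic polynomial depend continuously on its coefficients. First I would recall that in a euclidean Jordan algebra $J$ of rank $r$, every element $x$ admits a (generic) minimal polynomial
\begin{equation*}
  m_x(T) = T^r - a_1(x) T^{r-1} + a_2(x) T^{r-2} - \cdots + (-1)^r a_r(x),
\end{equation*}
whose roots counted with multiplicities are exactly the eigenvalues $\lambda_1, \ldots, \lambda_r$ of $x$. The coefficients $a_k(x)$ are the elementary symmetric functions of the $\lambda_i$; via Newton's identities they can be expressed as polynomial combinations of the power sums $\Trace{x^\ell}$, $1\leq\ell\leq r$. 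Since the spectral decomposition of $x$ gives $x^\ell=\sum_i\lambda_i^\ell p_i$ and hence $\Trace{x^\ell}=\sum_i\lambda_i^\ell$, and since the Jordan product is bilinear, each $x\mapsto\Trace{x^\ell}$ is polynomial and thus continuous in $x$. Consequently every $a_k$ is continuous in $x$.

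Next I would equip the target space, the set of multisets of $r$ real numbers, with the symmetric product topology, identifying it with the quotient $\R^r/S_r$ where $S_r$ is the symmetric group acting on $\R^r$ by permutation of coordinates. With this topology the map sending a monic degree-$r$ polynomial to its unordered tuple of roots is continuous; this is a standard fact, provable for instance by a residue/Rouch\'e argument in the complex plane. Composing with $x \mapsto (a_1(x),\ldots,a_r(x))$ then yields the continuity of $x\mapsto\spec(x)$.

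The only point that requires attention is fixing the correct topology on spectra: they must be viewed as multisets rather than as mere subsets of $\R$, otherwise an eigenvalue of higher multiplicity could split into several nearby simple eigenvalues and destroy naive continuity. Once the symmetric product topology is chosen, the statement is immediate. As an alternative avoiding the generic minimal polynomial, I could argue by direct compactness: given $x_n\to x$, write $x_n=\sum_i\lambda_i^n p_i^n$ with an ordering of the eigenvalues, extract a convergent subsequence of $(\lambda_i^n)_i$ (the spectral norm is controlled by the ambient euclidean norm, giving boundedness) and of $(p_i^n)_i$ (using compactness of the set of Jordan frames, which is homogeneous under a compact group), and check that the limit idempotents $q_i$ remain mutually orthogonal and sum to $e$, so that $x=\sum_i\mu_i q_i$ identifies the limits $\mu_i$ with the eigenvalues of $x$ after grouping equal values. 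The characteristic polynomial route is shorter, so that would be my preferred presentation.
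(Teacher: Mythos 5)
Your argument is correct and follows essentially the same route as the paper: the coefficients of the characteristic polynomial depend continuously (indeed polynomially) on the element, and the roots of a monic polynomial depend continuously on its coefficients. Your additional care about topologizing spectra as multisets (the quotient $\R^r/S_r$) is a worthwhile precision that the paper leaves implicit, but the underlying proof is the same.
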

\begin{proof}
  The characteristic polynomial of $a\in J$ is continuous on $a$ and the roots of a polynomial depend continuously on the polynomial.
\end{proof}
\begin{prop}
The metrics $\dthomlam{\lambda}{\cdot}{\cdot}$ and $\dhillam{\lambda}{\cdot}{\cdot}$ converge when $\lambda$ converges to $0$, and the limit metrics are both given by a norm. For $(u,v)\in J^2$ and $(u_0,v_0)\in J_0^2$ we have
  \begin{align*}
    \lim_{\lambda\rightarrow0}\dthomlam{\lambda}{u}{v}&=\|v-u\|, & \lim_{\lambda\rightarrow0}\dhillam{\lambda}{u_0}{v_0}\
%&=\diam\spec(v_0-u_0)
=\|u_0-v_0\|_{\sigma}.
  \end{align*}
\end{prop}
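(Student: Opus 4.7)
My plan is to combine the explicit formulas for $d_T$ and $d_H$ derived earlier with the two lemmas immediately above, reducing the statement to a first-order spectral expansion. First I would unfold the definitions: setting $A_\lambda:=\quadr{\exp(-\lambda u/2)}\exp(\lambda v)\in\mathcal{C}$, the Thompson and Hilbert formulas give
\begin{align*}
  \dthomlam{\lambda}{u}{v}&=\tfrac{1}{\lambda}\|\log A_\lambda\|, &
  \dhillam{\lambda}{u_0}{v_0}&=\tfrac{1}{\lambda}\diam\log\spec(A_\lambda),
\end{align*}
so everything boils down to a first-order description of $\spec(A_\lambda)$ as $\lambda\to 0$.

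By the first of the two preceding lemmas, $A_\lambda=e+\lambda w_\lambda$ with $w_\lambda:=\lambda^{-1}(A_\lambda-e)\to v-u$ in $J$. The key algebraic observation is that $e+\lambda w_\lambda$ lies in the associative unital subalgebra of $J$ generated by $w_\lambda$, so it is simultaneously diagonalisable with $w_\lambda$: writing a spectral decomposition $w_\lambda=\sum_i\mu_i(\lambda)p_i(\lambda)$ and using $e=\sum_i p_i(\lambda)$ one obtains $A_\lambda=\sum_i(1+\lambda\mu_i(\lambda))p_i(\lambda)$, hence $\spec(A_\lambda)=\{1+\lambda\mu_i(\lambda)\}_{i=1}^r$ as a multiset, all entries being positive since $A_\lambda\in\mathcal{C}$.

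Now the second preceding lemma (continuity of the spectrum) applied to $w_\lambda\to v-u$ shows that the multiset $\{\mu_i(\lambda)\}_i$ converges in Hausdorff distance to $\spec(v-u)$ and in particular stays uniformly bounded for $\lambda$ near $0$. Combined with $\log(1+t)=t+O(t^2)$ uniformly on bounded intervals, this gives $\tfrac{1}{\lambda}\log(1+\lambda\mu_i(\lambda))=\mu_i(\lambda)+O(\lambda)$ uniformly in $i$, and therefore
\begin{align*}
  \dthomlam{\lambda}{u}{v}&=\sup_i|\mu_i(\lambda)|+O(\lambda)\xrightarrow[\lambda\to 0]{}\|v-u\|,\\
  \dhillam{\lambda}{u_0}{v_0}&=\sup_{i,j}|\mu_i(\lambda)-\mu_j(\lambda)|+O(\lambda)\xrightarrow[\lambda\to 0]{}|v_0-u_0|_\sigma,
\end{align*}
which are the two announced limits. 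The only genuinely algebraic step is the simultaneous diagonalisation of $e+\lambda w_\lambda$ with $w_\lambda$; everything else is standard perturbation bookkeeping, relying on continuity of the spectrum and the elementary fact that the supremum and the diameter of a bounded subset of $\R$ are continuous with respect to Hausdorff convergence.
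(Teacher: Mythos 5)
Your proof is correct and follows essentially the same route as the paper: both reduce to the explicit spectral formulas for $d_T$ and $d_H$, apply the expansion $\quadr{\exp(-\lambda u/2)}\exp(\lambda v)=e+\lambda(v-u)+o(\lambda)$, and conclude via continuity of the spectrum. Your explicit simultaneous-diagonalisation of $e+\lambda w_\lambda$ with $w_\lambda$ merely spells out the step the paper performs implicitly when it commutes $\log$ with $\spec$ and discards the $o(\lambda)$ term.
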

\begin{proof}
    For $\lambda>0$ we have
  \begin{align*}
    \dthomlam{\lambda}{u}{v}&=\dthom{\exp(\lambda u)}{\exp(\lambda v)}/\lambda\\
    &=\|\log\left(\quadr{\exp(-\lambda u/2)}\exp(\lambda v)\right)\|/\lambda\\
    &=\|\log\left(e+\lambda(v-u) +o(\lambda)\right)\|/\lambda\\
    &=\|v-u\| +o(1),
  \end{align*}
and 
  \begin{align*}
    \dhillam{\lambda}{u_0}{v_0}&=\dhil{\exp(\lambda u_0)}{\exp(\lambda v_0)}/\lambda\\
    &=\diam\log\spec\left(\quadr{\exp(\lambda -u_0/2)}\exp(\lambda v_0)\right)/\lambda\\
    &=\diam\spec\log\left(e+\lambda(v_0-u_0) +o(\lambda)\right)/\lambda\\
    &=\diam\spec\left(\lambda(v_0-u_0) +o(\lambda)\right)/\lambda\\
    &=\diam\spec(v_0-u_0)+o(1).
  \end{align*}
\end{proof}

\section{Isometries fixing the identity}
We begin by the following fundamental result which was already used in the space case of hermitian definite positive complexe matrices in \cite{MR2529894}
\begin{prop}
Every isometry $g$ of the Thompson or the Hilbert metric preserves the Riemannian midpoints, that is satisfies $g(a\# b)=g(a)\# g(b)$.
\end{prop}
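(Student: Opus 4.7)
The plan is to reduce the proposition to the conjugation identity $g\circ i_m\circ g^{-1}=i_{g(m)}$, where $m:=a\# b$ and $i_p$ denotes the Riemannian geodesic inversion at $p$. The Riemannian midpoint $m$ is characterised by $i_m(a)=b$; evaluating the identity above on $g(a)$ would give $i_{g(m)}(g(a))=g(i_m(a))=g(b)$, which is exactly the statement $g(m)=g(a)\# g(b)$. So everything reduces to identifying the conjugate $\sigma:=g\circ i_m\circ g^{-1}$ with $i_{g(m)}$.

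By the just-proved Lemma, $i_m$ is a Thompson- and Hilbert-isometry, and therefore so is $\sigma$; moreover $\sigma$ is an involution, fixes $g(m)$, and exchanges $g(a)$ with $g(b)$. To recognise $\sigma$ as the Riemannian inversion at $g(m)$ my plan is to use the infinitesimal normed structure provided by the previous proposition: after rescaling through $\Phi_\lambda$ and letting $\lambda\to 0$, the Thompson metric degenerates to the JB-norm on $J$, so via the transitive isometric action of $\auto{\mathcal{C}}$ every tangent space to $\mathcal{C}$ acquires a canonical JB-normed structure. By Mazur--Ulam any Thompson-isometry $h$ fixing a point $p$ induces a well-defined linear isometry $dh_p$ of $(J,\|\cdot\|)$. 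The Riemannian inversion $i_p$ satisfies $d(i_p)_p=-\mathrm{id}$ (the standard symmetric-space symmetry property), and since $-\mathrm{id}$ is central it is invariant under conjugation by $dg_m$: this forces $\sigma$ to have differential $-\mathrm{id}$ at $g(m)$, whence $\sigma=i_{g(m)}$. The Hilbert case is treated identically on the section $\mathcal{C}_0$, replacing $\|\cdot\|$ by $\normdiam{\cdot}$.

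The main obstacle is the uniqueness step hidden in the last sentence: that $i_p$ is the only Thompson-isometry fixing $p$ whose differential at $p$ equals $-\mathrm{id}$. I would obtain this by showing that any such isometry $\tau$ must reverse every Riemannian geodesic through $p$. Since Riemannian geodesics through $p$ are also Thompson-geodesics on which $d_T$ is affinely parametrised (by the previous proposition), and since their initial vectors at $p$ live in the tangent JB-normed space on which $d\tau_p$ acts as $-\mathrm{id}$, one should be able to deduce $\tau(\gamma(t))=\gamma(-t)=i_p(\gamma(t))$ for every such geodesic $\gamma$. Because the Riemannian exponential map at $p$ covers all of $\mathcal{C}$, this uniquely determines $\tau=i_p$.
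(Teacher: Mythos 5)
Your strategy diverges completely from the paper's: the paper simply invokes the Lemma of Moln\'ar in \cite{MR2529894}, which characterises $a\# b$ purely in terms of the distance function, as a distinguished point among the (generally non-unique) metric midpoints of $a$ and $b$; any such purely metric characterisation is automatically preserved by an arbitrary isometry, and that is the whole proof. Your route through the conjugation identity $g\circ i_m\circ g^{-1}=i_{g(m)}$ is an appealing idea, and the reduction itself (via $i_m(a)=b\Leftrightarrow m=a\# b$) is correct, but the identification of $\sigma=g\circ i_m\circ g^{-1}$ with $i_{g(m)}$ contains a genuine gap, and in fact a circularity.

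First, the ``well-defined linear isometry $dh_p$'' that you attach to an arbitrary isometry $h$ fixing $p$ is not available at this stage. In the paper the blow-down maps $h_\lambda=\Phi_\lambda^{-1}\circ h\circ\Phi_\lambda$ are shown to converge only because they are all equal to $h_*$, and $h_*$ is defined using the fact that $h$ carries Riemannian geodesics to Riemannian geodesics --- a fact the paper deduces from the very proposition you are proving. Without it, a compactness argument yields at best subsequential limits, and the chain rule needed to compute the differential of the composite $\sigma$ is not justified. Second, and more seriously, the uniqueness step you flag yourself is exactly where the argument collapses: knowing that $\tau$ fixes $p$ and has (sub)limiting differential $-\mathrm{id}$ at $p$ controls $\tau$ only infinitesimally at $p$. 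To conclude $\tau(\gamma(t))=\gamma(-t)$ for a Riemannian geodesic $\gamma$ through $p$ you must first know that $\tau(\gamma)$ is again a Riemannian geodesic; a priori $\tau$ sends $\gamma$ only to some geodesic of $d_T$ (resp.\ $d_H$), and these metrics are very far from uniquely geodesic --- there are many $d_T$-geodesics through $p$ with the same initial data, and correspondingly many metric midpoints of $(a,b)$. Singling out the Riemannian geodesic (equivalently, the midpoint $a\# b$) among them is precisely the content of Moln\'ar's Lemma, and is also exactly what the paper's next Proposition extracts from midpoint preservation. So the step ``differential $-\mathrm{id}$ at $p$ implies $\tau=i_p$'' presupposes the conclusion. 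To repair the proof you would need an independent, purely metric way of distinguishing $a\# b$ among all metric midpoints, which is what the cited Lemma of \cite{MR2529894} supplies.
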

\begin{proof}
  See the Lemma in \cite{MR2529894} and how it is applied to show that isometries for the Hilbert and Thompson preserve the Riemannian midpoints (since the proof is exactly the same as in \cite{MR2529894} we do not duplicate it here).
\end{proof}
From this we infer
\begin{prop}
  Every isometry $g$ of the Thompson or the Hilbert metric preserves the Riemannian geodesics and in particular Riemannnian geodesic lines.
\end{prop}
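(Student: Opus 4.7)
The plan is to bootstrap the midpoint-preservation result of the previous proposition to preservation of entire Riemannian geodesic segments via a dyadic density argument, then extend to complete geodesic lines by taking unions of segments.

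Fix an isometry $g$ of the Thompson metric on $\mathcal{C}$ (or the Hilbert metric on $\mathcal{C}_0$). Given two points $a$ and $b$, let $\gamma:[0,1]\to\mathcal{C}$ be the unique constant-speed Riemannian geodesic from $a$ to $b$ (unique because the Riemannian structure is Hadamard), and let $\gamma':[0,1]\to\mathcal{C}$ be the corresponding Riemannian geodesic from $g(a)$ to $g(b)$. The first step is to prove by induction on $n\geq 0$ that
\begin{equation*}
  g(\gamma(k/2^n))=\gamma'(k/2^n) \quad \text{for all } 0\leq k\leq 2^n.
\end{equation*}
The case $n=0$ is the definition of $g$ on the endpoints. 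For the inductive step, it suffices to treat an odd $k$, in which case $\gamma(k/2^n)$ is the Riemannian midpoint of $\gamma((k-1)/2^n)$ and $\gamma((k+1)/2^n)$; applying the previous proposition $g(x\#y)=g(x)\#g(y)$ and the inductive hypothesis yields the claim for level $n$.

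The second step is to pass from the dense dyadic set to the full interval. An isometry for a genuine metric is automatically continuous, and the Thompson and Hilbert metrics both induce the usual topology on $\mathcal{C}$ (respectively on $\mathcal{C}_0$), so $g\circ\gamma$ and $\gamma'$ are continuous maps $[0,1]\to\mathcal{C}$ that agree on a dense subset; hence they agree everywhere, and $g$ maps the Riemannian segment $\gamma([0,1])$ onto $\gamma'([0,1])$.

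Finally, to upgrade to full geodesic lines: given a Riemannian geodesic line $\ell\subset\mathcal{C}$, write $\ell=\bigcup_{n\in\N}[p_n,q_n]$ where $[p_n,q_n]$ is an increasing sequence of Riemannian segments along $\ell$ exhausting it. By the preceding paragraph, $g([p_n,q_n])$ is the Riemannian segment from $g(p_n)$ to $g(q_n)$; these segments are nested (by uniqueness of Riemannian geodesics and the inclusions $[p_n,q_n]\subset[p_{n+1},q_{n+1}]$), so their union $g(\ell)$ is a Riemannian geodesic line in $\mathcal{C}$.

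The only delicate point to watch is the Hilbert case, where $g$ is really an isometry of $\mathcal{C}_0$ (or of the projectification) while the midpoint operation $a\#b$ is a priori defined in $\mathcal{C}$. This is harmless because the slice $\mathcal{C}_0$ is totally geodesic for the Riemannian structure: the geodesics $t\mapsto\exp(tu)$ issued from $e$ with $u\in J_0$ stay in $\mathcal{C}_0$, and transitivity of $\auto{\mathcal{C}}$ (which is Riemannian and preserves both $\mathcal{C}$ and $\mathcal{C}_0$) propagates this to arbitrary basepoints. Hence the Riemannian midpoint of two points of $\mathcal{C}_0$ lies in $\mathcal{C}_0$ and coincides with the midpoint computed in $\mathcal{C}$, so the argument above applies verbatim. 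No step is really hard; the main thing to verify carefully is this compatibility between $\mathcal{C}_0$ and $\mathcal{C}$ in the Hilbert case.
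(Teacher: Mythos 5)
Your proof is correct and follows essentially the same route as the paper: both arguments use the midpoint-preservation proposition to control a dense set of iterated midpoints on a Riemannian segment and then conclude by continuity of $g$ (the Thompson and Hilbert metrics inducing the manifold topology). Your version is merely more explicit about the dyadic induction, the exhaustion of geodesic lines by nested segments, and the compatibility of $\mathcal{C}_0$ with the midpoint operation in the Hilbert case, all of which the paper leaves implicit.
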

\begin{proof}
  An isometry for any of those metrics must be a homeomorphism of  the underlying symmetric space because the Thompson and the Hilbert metrics generate the topology of the underlying manifold. If $[a,b]$ is a compact geodesic then if we put $M_0(a,b)=\{a,b\}$ and define inductively $M_{n+1}(a,b)$ for $n>0$ to be the union of $M_n(a,b)$ and the midpoints of pairs of points of $M_n(a,b)$ then $M(a,b)=\cup_{n\geq0}M_n$ is dense in $[a,b]$. But since $g$ preserves the midpoints we must have $g(M(a,b))=M(g(a),g(b))$ and by density $g([a,b])=[g(a),g(b)]$.
\end{proof}
Assume now that $g$ is an isometry for either the Thompson or the Hilbert metric and that $g$ fixes $e$. Let $d$ be the metric for which $g$ is an isometry. If $\lambda>0$ then $g$ must be an isometry for $d/\lambda$ too. If $d$ is the Thompson metric, let $g_\lambda$ be the push-back by $\Phi_\lambda$ of $g$ and $d_\lambda$ the push-back of the metric $d/\lambda$ by the same homeomorphism. If $d$ is the Hilbert metric, replace $\Phi_\lambda$ by its double restriction $(\Phi_\lambda)_{|J_0}^{|\mathcal{C}_0}$ and proceed similarly. Then obviously $g_\lambda$ must be an isometry of $d_\lambda$. Let us compute $g_\lambda$
\begin{defi}
  If $g$ is an isometry of $d_T$ (resp. of $d_H$) fixing the identity then $g$ sends a Riemannian constant speed geodesic $t\mapsto\exp(tu)$, $u\in J$ (resp. $u\in J_0$), to another Riemannian constant speed geodesic and hence there exists a well-defined $v\in J$ (resp. $v\in J_0$) such that $g(\exp(tu))=\exp(tv)$ for every $t\in\R$. We write $g_*(u)$ for this $v$. Obviously $g_*$ is homogeneous of degree one and for every $u\in J$ (resp. $u\in J_0$) and $t\in\R$ we have
  \begin{equation*}
    g(\exp(tu))=\exp(tg_*(u)).
  \end{equation*}
\end{defi}
We have
\begin{prop}
$g_\lambda$ is constant equal to $g_*$. In particular the $g_\lambda$ converge to $g_*$ and $g_*$ is a surjective isometry of the limit norm $\lim_{\lambda\rightarrow0}d_\lambda$.
\end{prop}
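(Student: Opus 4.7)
The plan is to compute $g_\lambda$ directly from the definitions and to recognize it as $g_*$; once that identification is made, the remaining claims follow by inspection and one elementary passage to the limit.

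First I would unwind $g_\lambda = \Phi_\lambda^{-1}\circ g\circ \Phi_\lambda$ (with $\Phi_\lambda$ replaced by its double restriction $(\Phi_\lambda)_{|J_0}^{|\mathcal{C}_0}$ in the Hilbert case). The definition of $g_*$, whose existence rests on the preceding proposition that isometries fixing $e$ preserve the Riemannian geodesic lines through $e$, gives $g(\exp(tu)) = \exp(tg_*(u))$ for every $t\in\R$ and every admissible $u$. Specialising this at $t=\lambda$ yields $g(\Phi_\lambda(u)) = \Phi_\lambda(g_*(u))$, so $g_\lambda(u)=g_*(u)$ for every $u\in J$ (resp.\ every $u\in J_0$). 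Since this identity is $\lambda$-independent, $g_\lambda$ is constant equal to $g_*$, and the asserted convergence $g_\lambda\to g_*$ is then trivial.

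For the remaining assertion, for every $\lambda>0$ the map $g_\lambda = g_*$ is by construction an isometry of $d_\lambda$, so $d_\lambda(g_*(u),g_*(v)) = d_\lambda(u,v)$ for all admissible pairs. The preceding proposition shows that $d_\lambda$ converges pointwise, as $\lambda\to 0$, to the norm $\|\cdot\|$ (resp.\ $|\cdot|_\sigma$) evaluated on differences; passing to the limit in the identity above yields the desired norm-isometry property. For surjectivity I would apply the same construction to $g^{-1}$, which again is an isometry fixing $e$: unwinding the definition of the starred operation on the identities $g\circ g^{-1} = g^{-1}\circ g = \mathrm{id}$ shows that $(g^{-1})_*$ is a two-sided inverse for $g_*$, so $g_*$ is a bijection of $J$ (resp.\ of $J_0$).

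There is no real obstacle here: the only non-trivial geometric ingredient is the preceding result that isometries fixing $e$ permute the Riemannian geodesic lines through $e$, which is exactly what makes $g_*$ well defined. Everything else is a routine chase through the definitions of $\Phi_\lambda$, $g_\lambda$ and $g_*$, combined with a single pointwise passage to the limit justified by the previous proposition.
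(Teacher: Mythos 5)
Your proposal is correct and follows essentially the same route as the paper: both compute $g_\lambda(u)=\Phi_\lambda^{-1}(g(\Phi_\lambda(u)))=g_*(u)$ directly from the defining identity $g(\exp(tu))=\exp(tg_*(u))$, and then deduce the limit statement and surjectivity (the paper notes surjectivity follows from that of $g$, or alternatively from $g_*$ being an isometry of a finite-dimensional normed space; your argument via $(g^{-1})_*$ is a harmless elaboration of the same point).
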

\begin{proof}
  \begin{align*}
      g_\lambda(u)=\frac{1}{\lambda}\log g(\exp{\lambda u})=\frac{1}{\lambda}\log\exp{\lambda g_*(u)}=g_*(u).
  \end{align*}
$g_*$ must be surjective because so is $g$. It is also a consequence of the fact that it is an isometry of a finite dimensional normed space.
\end{proof}
\begin{prop}
  $g_*$ is a linear isomorphism of $J$ if $d$ is the Thompson metric and of $J_0$ if $d$ is the Hilbert metric.
\end{prop}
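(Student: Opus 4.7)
The plan is to reduce the statement to the classical Mazur-Ulam theorem. The previous proposition has already done the heavy lifting: it tells me that $g_*$ is a surjective isometry of a finite-dimensional real normed space, namely $(J,\|\cdot\|)$ in the Thompson case and $(J_0,|\cdot|_\sigma)$ in the Hilbert case. So what remains is to (i) check that I really am dealing with a norm in the Hilbert case, and (ii) check that $g_*$ fixes the origin; then Mazur-Ulam immediately gives $\R$-linearity.

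For (i), note that $|\cdot|_\sigma$ is only a seminorm on $J$ (it vanishes on $\R e$), but it is a genuine norm on $J_0$: if $|x|_\sigma=0$ then all eigenvalues of $x$ coincide, so $x=\lambda e$ for some $\lambda\in\R$, and the condition $x\in J_0$ forces $\trace{x}=r\lambda=0$, hence $x=0$. For (ii), recall that $g$ fixes $e$ by hypothesis; the constant curve $t\mapsto\exp(t\cdot 0)=e$ is a Riemannian geodesic through $e$, which $g$ sends to the constant curve at $g(e)=e$. The only $v\in J$ for which $\exp(tv)=e$ identically in $t$ is $v=0$, so by definition of $g_*$ one gets $g_*(0)=0$. (In the Hilbert case the same reasoning takes place inside $J_0$.)

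Now I invoke the Mazur-Ulam theorem: any surjective isometry of a real normed space which fixes the origin is $\R$-linear. Applied to $g_*$ this gives linearity. Bijectivity is automatic for a surjective isometry of a finite-dimensional normed space (isometries are injective, surjectivity is part of the hypothesis), so $g_*$ is a linear isomorphism of $J$ in the Thompson case, and of $J_0$ in the Hilbert case.

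I do not expect any real obstacle here. The non-trivial content -- that $g_*$ converges from the rescaled isometries $g_\lambda$, is well-defined, homogeneous, and isometric for the limit norm -- is already packaged in the preceding proposition, and the only genuine ingredient one adds is Mazur-Ulam. The small verification that $|\cdot|_\sigma$ actually separates points on $J_0$ is the one spot where a proof-writer could slip if she or he applied Mazur-Ulam blindly to a seminormed space, but as noted above this is immediate from the trace-zero condition.
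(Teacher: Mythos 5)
Your proof is correct and follows exactly the paper's route: the paper's entire proof is ``Direct consequence of the Mazur-Ulam Theorem,'' and you supply the same argument with the (worthwhile) extra checks that $|\cdot|_\sigma$ is definite on $J_0$ and that $g_*(0)=0$. No issues.
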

\begin{proof}
  Direct consequence of the Mazur-Ulam Theorem.
\end{proof}
We proved that isometries for the Thompson or the Hilbert metric are well-behaved with respect to the geometric mean. In fact they also behave nicely with respect to the spectral mean as the following Proposition shows
\begin{prop}
 Every isometry $g$ of the Thompson or the Hilbert metric preserves the spectral mean i.e. satisfy $g(\specmean{a}{b})=\specmean{g(a)}{g(b)}$ for every $(a,b)\in\mathcal{C}^2$.
\end{prop}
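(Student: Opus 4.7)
The plan is to reduce to the case $g(e)=e$ and then apply $g$ to the implicit equation that characterises $\specmean{a}{b}$.

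For the reduction, one uses that $\auto{\mathcal{C}}$ acts transitively on $\mathcal{C}$ by isometries of all three metrics. Pick $h \in \auto{\mathcal{C}}$ with $h(e)=g(e)$; then $h^{-1}g$ is an isometry fixing $e$. Cone automorphisms preserve the spectral mean (this is part of the machinery developed in \cite{kim2006jordan}; it reflects the fact that $\specmean{\cdot}{\cdot}$ is built out of the geometric mean and the quadratic representation, both of which transform equivariantly under $\auto{\mathcal{C}}$), so it is enough to prove the statement for $g$ fixing $e$.

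Assume now $g(e)=e$. By the previous Proposition, $g$ preserves Riemannian midpoints. Since $a^{-1}$ is characterised by the relation $a\# a^{-1}=e$ and $e$ is fixed, midpoint-preservation forces $g(a^{-1})=g(a)^{-1}$. Similarly, the identity $e\# c=c^{1/2}$ yields $g(c^{1/2})=g(c)^{1/2}$ for every $c\in\mathcal{C}$ (uniqueness in both cases is guaranteed by the Hadamard nature of the Riemannian symmetric space). Applying $g$ to both sides of the characterising equation $(a^{-1}\# b)^{1/2}=a^{-1}\# \specmean{a}{b}$ and using these three properties gives
\[(g(a)^{-1}\# g(b))^{1/2}=g(a)^{-1}\# g(\specmean{a}{b}),\]
and the uniqueness clause in the definition of the spectral mean concludes the proof.

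The main obstacle is the $\auto{\mathcal{C}}$-invariance of the spectral mean used in the reduction. This fact is standard but does not follow straightforwardly from the characterising equation alone, because a general cone automorphism does not fix $e$ and therefore does not commute with the algebra inversion $a \mapsto a^{-1}$; one has to invoke the identities relating the quadratic representation to $\auto{\mathcal{C}}$, or cite \cite{kim2006jordan} directly.
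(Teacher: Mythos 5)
Your treatment of an isometry fixing $e$ is exactly the paper's argument: the paper likewise notes that midpoint preservation forces preservation of the inversion and of the square root and then applies $g$ to the characterising equation $(a^{-1}\# b)^{1/2}=a^{-1}\# x$; you merely make explicit the identities $a\# a^{-1}=e$ and $e\# c=c^{1/2}$ and the uniqueness arguments behind them, and that part is complete and correct.

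The problem is the reduction, which you rightly single out as the main obstacle but then dispose of by asserting that $\auto{\mathcal{C}}$ preserves the spectral mean, citing \cite{kim2006jordan}. That assertion is false in general, and the paper is no better off (it writes only ``we can assume that $g$ fixes the identity''). For the symmetric cone of positive definite symmetric matrices the spectral mean is the Fiedler--Pt\'ak mean $(a^{-1}\# b)^{1/2}\,a\,(a^{-1}\# b)^{1/2}$, whose eigenvalues are the square roots of the eigenvalues of $ab$; hence a congruence $x\mapsto m^{t}xm$, which lies in $\auto{\mathcal{C}}$ and is therefore a Thompson and Hilbert isometry, would have to send these to the square roots of the eigenvalues of $(m^{t}am)(m^{t}bm)$, and it does not: already for $a=I$, $b=\mathrm{diag}(4,1)$ and $m=\left(\begin{smallmatrix}1&1\\0&1\end{smallmatrix}\right)$ the traces of $m^{t}(\specmean{a}{b})m$ and of $\specmean{(m^{t}am)}{(m^{t}bm)}$ are $5$ and $\sqrt{26}$ respectively. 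So the equivariance you invoke is not available, the reduction does not go through, and the Proposition as stated fails already for $g=\quadr{c}$. What your proof (and the paper's) actually establishes is the statement for isometries fixing the identity, which is the only case used later in the paper (to deduce that $\hat h_*$ preserves simultaneously diagonalisable pairs); the statement should be restricted accordingly rather than patched by citation.
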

\begin{proof}
 We can assume that $g$ fixes the identity. Since $g$ preserves midpoints it must preserve the inversion and the square root. But since the spectral mean of $a$ and $b\in\mathcal{C}$ is the only solution $x$ in $\mathcal{C}$ of $(a^{-1}\# b)^{1/2}=a^{-1}\# x$, $g$ must also preserve it.
\end{proof}
\section{Case of the Thompson metric}
\begin{defi}
  A symmetry of an algebra $\mathcal{A}$ is an element $s\in\mathcal{A}$ such that $s^2=1$. It is called central if it lies in the center of the algebra $\mathcal{A}$.
\end{defi}
The following is proved in \cite{isidro1995isometries}. Let us recall that a euclidean algebra endowed with the already defined $JB$-norm is a (finite dimensional) $JB$-algebra.
\begin{prop}
  The isometries of a (not necessarily simple) JB-algebra are exactly the maps $x\mapsto b\cdot\Phi(x)$ where $b$ is a central symmetry and $\Phi$ is a Jordan isomorphism. For unital isometries (i.e. preserving the unit $e$) we have $b=e$. 
\end{prop}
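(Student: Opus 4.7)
The plan is to split the statement into a unital case and a reduction. First I would prove that every surjective linear isometry $g$ of $J$ with $g(e) = e$ is a Jordan automorphism; then I would extract from a general isometry $g$ a central symmetry $b = g(e)$, verify that $L(b) \circ g$ is unital, and invoke the unital case.

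For the unital case, I would exploit that the JB-norm controls the spectrum: for self-adjoint $x$ one has $\max\spec(x) = \lim_{t\to+\infty}(\|x+te\|-t)$, and analogously for the minimum. A unital isometry $g$ preserves all these limits, so it preserves $\max\spec$ and $\min\spec$; applying this to the translates $x - \mu e$ for every $\mu\in\R$ detects membership in the spectrum (since $\mu\in\spec(x)$ iff $\max\spec((x-\mu e)^2)$ vanishes) and shows that $g$ preserves the full spectrum of every self-adjoint element. This forces $g$ to commute with continuous functional calculus, in particular with squaring, so $g(x^2) = g(x)^2$. Polarising via $2xy = (x+y)^2 - x^2 - y^2$ then yields the Jordan-homomorphism property, and surjectivity closes the case.

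For the reduction, I would first observe that a surjective linear isometry permutes the extreme points of the unit ball, and those extreme points are exactly the symmetries of $J$: a spectral decomposition $s = \sum \lambda_i p_i$ with $\|s\|=1$ admits a nontrivial norm-preserving perturbation by some $p_i$ unless every $|\lambda_i|=1$, i.e.\ unless $s^2 = e$. Since $e$ is extreme, so is $b := g(e)$, whence $b^2 = e$. The delicate step is centrality of $b$: in the Peirce decomposition $J = J_1(p) \oplus J_{1/2}(p) \oplus J_0(p)$ along $p = (e+b)/2$, one argues that the JB-axiom $\|x^2\|=\|x\|^2$, combined with the requirement that $L(b)\circ g$ still be an isometry of the JB-norm, forces $J_{1/2}(p) = 0$; this is equivalent to $b$ being central, since the vanishing of the Peirce $1/2$-space means $p$ (hence $b = 2p - e$) commutes with every $L(y)$.

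Granting centrality, $L(b)^2 = L(b\cdot b) = L(e) = I$ because $b(by) = (bb)y$ by centrality, and $L(b)$ is a JB-isometry because in the decomposition $J = pJ \oplus p'J$ associated to the central idempotents $p,p'$ it acts as $+I$ and $-I$ respectively. Consequently $\Phi := L(b)\circ g$ is a unital surjective linear isometry, hence a Jordan automorphism by the unital case, and $g(x) = L(b)^{-1}\Phi(x) = b\cdot \Phi(x)$ gives the desired form. The main obstacle is the centrality of $b$: the algebraic identity $b^2 = e$ alone does not force $b$ into the center — non-central symmetries exist in general reducible JB-algebras — and the argument must genuinely use the interplay between the JB-norm axioms and the Peirce structure. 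The unital case, by contrast, is mostly a matter of tracking what the JB-norm remembers of the spectrum, and reduces in a few lines to preservation of squares plus polarisation.
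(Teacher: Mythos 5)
The paper does not actually prove this proposition: it is quoted from Isidro and Rodr\'iguez-Palacios \cite{isidro1995isometries}, so your attempt can only be judged on its own terms. As an outline it follows the standard strategy (unital case plus reduction via the extreme points of the unit ball, which are indeed the symmetries), but two steps are genuinely broken. First, in the unital case, the quantities $\lim_{t\to+\infty}(\|x+te\|-t)$ applied to the translates $x-\mu e$ give nothing beyond $\max\spec(x)$ and $\min\spec(x)$, because $\spec(x-\mu e)=\spec(x)-\mu$; the function $t\mapsto\|x+te\|$ does not determine the full spectrum (in a rank-$3$ frame, $p_2+p_3$ and $\tfrac12 p_2+p_3$ have the same norm after every real translation but different spectra). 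Your proposed detector $\max\spec((x-\mu e)^2)$ --- besides needing the minimum rather than the maximum --- involves the \emph{square} of $x-\mu e$, and a linear isometry is not yet known to commute with squaring: that is exactly the conclusion you are after, so the step is circular. Even granting full spectrum preservation, a spectrum-preserving linear bijection does not automatically commute with functional calculus. The workable route (Wright--Youngson) is different: a unital surjective isometry of an order-unit space preserves the order interval $[-e,e]$, hence positivity; projections are then recovered as the extreme points of $[0,e]$, orthogonality of projections is read off from $\|p+q\|\leq1$, and the Jordan-morphism property follows from the spectral decomposition.

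Second, the centrality of $b=g(e)$, which you rightly call the delicate step and which is the actual content of the theorem in the non-simple case, is asserted rather than proved: you write that ``one argues'' that the Peirce space $J_{1/2}(p)$ vanishes, using ``the requirement that $L(b)\circ g$ still be an isometry''. You cannot require that: $L(b)$ is an isometry precisely when $b$ is central, since for a non-central symmetry $b=2p-e$ the operator $L(b)=2L(p)-Id$ acts as $0$ on the nonzero space $J_{1/2}(p)$ and is not even injective. Invoking the isometry of $L(b)\circ g$ therefore presupposes what is to be shown. An actual argument must extract the vanishing of $J_{1/2}(p)$ from properties of $g$ alone, and this is where the real work of \cite{isidro1995isometries} lies. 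As it stands the proposal is a plausible roadmap with its two hardest junctions left unbuilt.
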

Now let $g$ be any isometry for the Thompson metric. After composing $g$ on the right by some element $h$ of the transitive isometry group $\auto{J}$ we get an isometry fixing the identity $e$. From now on we hence assume that $g$ fixes $e$. We proved in the preceding section that $g_*$ is then an isometry of the JB-algebra $J$. Assume first that it fixes the identity. Then according to the proposition above it must be an algebra isomorphism of $J$, and so
\begin{align*}
  \forall u\in\mathcal{C},\ g(\exp(u))=\exp(g_*(u))=g_*(\exp(u)).
\end{align*}
But then $g$ is the restriction to $\mathcal{C}$ of an algebra isomorphism of $J$. Let us come back to the general case. Then we can fix a central symmetry $b$ and an algebra isomorphism $\Phi$ such that $g_*(x)=b\Phi(x)$ for all $x\in J$.
The following Lemma is certainly well known but since we did not find any proof in the existing litterature we include one that does not use the classification of simple euclidean Jordan algebras
\begin{lemma}\label{lem:center}
  The center of a simple euclidean algebra is $\R e$ where $e$ is the unit element.
\end{lemma}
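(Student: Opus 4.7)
The plan is to reduce the statement to showing that every central idempotent is $0$ or $e$, and then handle the idempotent case by a Peirce decomposition / ideal argument.

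For the reduction, I would take a central $x$ with spectral decomposition $x=\sum_{i=1}^{r}\lambda_i p_i$, group the $p_i$ by the distinct eigenvalues $\mu_1,\dots,\mu_s$, and set $q_k=\sum_{\lambda_i=\mu_k}p_i$. These are orthogonal idempotents summing to $e$ with $x=\sum_{k=1}^{s}\mu_k q_k$. Lagrange interpolation at the $\mu_k$ expresses each $q_k$ as a polynomial in $x$, and since the center of $J$ is a unital subalgebra containing $x$, each $q_k$ is central. If every central idempotent equals $0$ or $e$, then the $q_k$ are nonzero, orthogonal and sum to $e$, which forces $s=1$ and $x=\mu_1 e\in\R e$.

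The heart of the argument is therefore to prove that every central idempotent $p$ is $0$ or $e$. Writing $p'=e-p$, centrality of $p$ gives $L(p)^2=L(p\cdot p)=L(p)$, so $L(p)$ is a linear projection; in particular the usual Peirce $1/2$-eigenspace vanishes and $J=J_1(p)\oplus J_0(p)$ with $J_1(p)=pJ$ and $J_0(p)=p'J$. I would then check that both summands are two-sided ideals: for $a\in J_1(p)$ (so $pa=a$) and any $b\in J$, centrality gives $p(ab)=(pa)b=ab$, hence $ab\in J_1(p)$; the same argument applied to the central idempotent $p'$ handles $J_0(p)$. Simplicity of $J$ forces one of these ideals to vanish, giving $p=0$ (when $J_1(p)=0$, since $p=p\cdot e\in J_1(p)$) or $p=e$ (when $J_0(p)=0$, since then $pa-a=-p'a\in J_0(p)$ must vanish for every $a$).

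The main technical point is the Peirce step, where centrality of $p$ is used both to eliminate the $1/2$-eigenspace of $L(p)$ and to promote the surviving Peirce summands to ideals. Once this is in place, simplicity does the rest, and the spectral reduction to idempotents via Lagrange interpolation is routine.
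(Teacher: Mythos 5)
Your proof is correct, but it takes a genuinely different route from the paper's. The paper's argument is a two-line reduction to a structural fact quoted from Faraut--Kor\'anyi: for $z$ central, $(x,y)\mapsto\Trace{(zx)y}$ is an associative symmetric bilinear form, and on a \emph{simple} Jordan algebra every such form is a scalar multiple of the trace form (Proposition III.4.1 of that book); setting $x=e$ then puts $z-\lambda e$ in the radical, which vanishes by semisimplicity. Your argument instead works entirely from first principles: you reduce to central idempotents via the spectral theorem and Lagrange interpolation (legitimate, since the center is a unital subalgebra and the spectral projections grouping equal eigenvalues are polynomials in $x$ and $e$), then use centrality in the form $L(p)L(a)=L(pa)$ to get $L(p)^2=L(p)$, kill the Peirce $1/2$-space, and exhibit $J_1(p)=pJ$ and $J_0(p)=p'J$ as ideals, so that simplicity forces $p\in\{0,e\}$. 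All the individual steps check out: the center is indeed closed under products and contains $e$, the identity $p(ab)=(pa)b=ab$ for $pa=a$ does make $J_1(p)$ an ideal, and $J_1(p)=J$ gives $p=pe=e$ while $J_1(p)=0$ gives $p=0$. What each approach buys: the paper's is shorter but leans on a nontrivial imported result about associative bilinear forms, whereas yours is longer but self-contained, uses only the definition of simplicity, and in fact proves the stronger and independently useful statement that a Euclidean Jordan algebra is simple if and only if its only central idempotents are $0$ and $e$ (which is essentially how the decomposition into simple factors is organized).
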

\begin{proof}
  Let $z$ be in the center and let us write  $( x,y)$ for the associative bilinear form $\Trace{xy}$. Then the bilinear form $(x,y)\mapsto\langle zx,y\rangle$ is clearly also an associative bilinear form. But then according to the Proposition \setcounter{longtmp}{3}\Roman{longtmp}.$4$.$1$. of \cite{faraut1994analysis} this form must be a multiple of the original one, i.e. for some $\lambda\in\R$ we have
  \begin{align*}
    \forall x,y\in J,\ ( zx,y)=\lambda ( x,y).
  \end{align*}
Choosing $x=e$ and $y$ arbitrary we obtain that $z-\lambda e$ is in the radical of $J$. Since the radical is reduced to $\{0\}$ by assumption we must have $z=\lambda e$.
\end{proof}
\begin{lemma}
  Consider the decomposition $J=J_1\times\cdots J_n$ into simple euclidean algebras, and let $e_i$ be the multiplicative unit of $J_i$. Then the central symmetries are exactly the $\sum_{i=1}^n \epsilon_i e_i$ where $\epsilon_i\in\{\pm1\}$.
\end{lemma}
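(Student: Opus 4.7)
The plan is to first identify the center of $J$ using the product decomposition, then solve the equation $s^2 = e$ restricted to the center. The previous lemma (Lemma \ref{lem:center}) gives us the full description of the center of each factor, so the whole result reduces to an elementary calculation using the orthogonality of the units $e_i$.

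First I would argue that the center of a direct product of Jordan algebras is the product of the centers. Indeed, for $z = (z_1,\ldots,z_n) \in J$ to commute (in the Jordan sense) with every $(x_1,\ldots,x_n) \in J$, one needs $z_i(x_i y_i) = (z_i x_i) y_i$ for every $x_i, y_i \in J_i$ and every $i$, since the product is coordinatewise. Combined with Lemma \ref{lem:center}, which asserts that the center of each simple factor $J_i$ is $\R e_i$, this shows that the center of $J$ is exactly $\R e_1 \oplus \cdots \oplus \R e_n$.

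Next I would use the fact that any central symmetry $s$ must lie in this center, so it can be written uniquely as $s = \sum_{i=1}^n \lambda_i e_i$ with $\lambda_i \in \R$. Since the $e_i$ come from different factors of the product, they are mutually orthogonal ($e_i \cdot e_j = 0$ for $i \neq j$) and idempotent ($e_i^2 = e_i$), so a direct computation gives
\begin{equation*}
s^2 = \sum_{i=1}^n \lambda_i^2 e_i.
\end{equation*}
The unit of $J$ is $e = \sum_{i=1}^n e_i$, and since the $e_i$ are linearly independent, the equation $s^2 = e$ forces $\lambda_i^2 = 1$ for every $i$, that is, $\lambda_i \in \{\pm 1\}$. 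Conversely, any element of that form is clearly a central element squaring to $e$, which settles the reverse inclusion.

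I do not expect any real obstacle here; the only subtle point is justifying that the center of a product Jordan algebra is the product of the centers, which is immediate once one writes out the associativity condition componentwise. Everything else is bookkeeping on the orthogonal idempotents $e_i$.
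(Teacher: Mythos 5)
Your proof is correct and follows essentially the same route as the paper's: decompose along the simple factors, invoke Lemma \ref{lem:center} to identify the center of each factor as $\R e_i$, and solve $s^2=e$ componentwise using the orthogonality of the units $e_i$. The only cosmetic difference is that you first describe the full center of the product before imposing the symmetry condition, whereas the paper directly observes that each component of a central symmetry must be a central symmetry of its factor; the content is identical.
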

\begin{proof}
  The elements of the form $\sum_{i=1}^n \epsilon_i e_i$ with $\epsilon_i\in\{\pm1\}$ are obviously central symmetries. Reciprocally if $u=\sum_{i=1}^n u_i$ is a central symmetry of $J$ where the $u_i$ lie in $J_i$ then each $u_i$ must be a central symmetry of $J_i$. According to the Lemma \ref{lem:center} the center of $J_i$ is $\R e_i$, and so the $u_i$ must be equal to either $e_i$ or $-e_i$.
\end{proof}
$\Phi$ being an algebra isomorphism must permute isometric simple factors. Hence after composing on the right by the corresponding permutation algebra isomorphism $\sigma$ we can assume that $\Phi$, and hence $g_*$, preserves each irreducible factor (remark that $\sigma\in\auto{\mathcal{C}}$). We just proved
\begin{prop}
  Let $g$ be an isometry for the Thompson metric. Then after composing $g_*$ on the right by some algebra isomorphism $\sigma^*$ we get $x\mapsto b x$ for some central symmetry $b$. For $g$ this means that after composing by some $\sigma\in\auto{\mathcal{C}}$ we get a map $a=\sum_{1\leq i\leq n}a_i\mapsto\sum_{1\leq i\leq n}a_i^{\epsilon_i}$ for some $\epsilon_i\in\{\pm1\}$.
\end{prop}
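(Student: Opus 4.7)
The plan is to assemble the pieces already established earlier in the section into a single statement about $g$, translating from $g_*$ back to $g$ via the exponential map. First, since $\auto{\mathcal{C}}$ acts transitively on $\mathcal{C}$ and is contained in the isometry group of the Thompson metric, I would pre-compose $g$ with a suitable automorphism to reduce to the case where $g$ fixes $e$. At that point, the Proposition on $g_*$ from the previous section says that $g_*$ is a linear isomorphism of $J$, and it is in fact a surjective isometry of the JB-norm (limit of the push-backs $d_\lambda$). The isometry description borrowed from \cite{isidro1995isometries} then writes $g_*(x) = b\cdot\Phi(x)$ for some central symmetry $b$ and Jordan isomorphism $\Phi$ of $J$.

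Next, I would analyze $\Phi$. Since $\Phi$ is a Jordan algebra isomorphism and the decomposition $J = J_1 \times \cdots \times J_n$ into simple factors is canonical (the $J_i$ are exactly the minimal ideals of $J$), $\Phi$ must send each $J_i$ isomorphically onto some $J_{\sigma(i)}$, with $\sigma$ a permutation of $\{1,\dots,n\}$ such that $J_i$ and $J_{\sigma(i)}$ are Jordan-isomorphic. The permutation part of $\Phi$ is itself a Jordan algebra isomorphism of $J$, and since Jordan isomorphisms preserve squares and hence the set of squares, this permutation preserves $\mathcal{C}$, i.e.\ it lies in $\auto{\mathcal{C}}$. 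After composing $g$ on the right with the corresponding element of $\auto{\mathcal{C}}$, I may assume $\Phi$ stabilizes each factor $J_i$, and moreover (by absorbing the factor-wise Jordan automorphism into the composition if desired, but more to the point: by the statement we only need $\Phi$ stabilizing each factor) the lemma on central symmetries writes $b = \sum_{i=1}^n \epsilon_i e_i$ with $\epsilon_i \in \{\pm 1\}$. This gives $g_*(x) = b\,x$ on each factor, which is the first assertion of the proposition.

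Finally, to translate this into a formula for $g$ itself, I use the defining identity $g(\exp(tu)) = \exp(t\,g_*(u))$ for every $u \in J$. Given $a \in \mathcal{C}$, decompose it as $a = \sum_{i=1}^n a_i$ with $a_i \in \mathcal{C}_{J_i}$, and write each $a_i = \exp(u_i)$ so that $a = \exp(u)$ with $u = \sum u_i$. Because the $J_i$ are mutually orthogonal subalgebras, $g_*(u) = b\,u = \sum \epsilon_i u_i$, and hence
\begin{align*}
g(a) = \exp\Bigl(\sum_{i=1}^n \epsilon_i u_i\Bigr) = \sum_{i=1}^n \exp(\epsilon_i u_i) = \sum_{i=1}^n a_i^{\epsilon_i},
\end{align*}
which is precisely the second assertion.

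The only step requiring a little care is the claim that the permutation of the simple factors sits inside $\auto{\mathcal{C}}$; this is the main substantive point, but it follows immediately because Jordan isomorphisms preserve the cone of squares, so the cone structure is respected by any factor permutation coming from an isomorphism. Everything else is a direct assembly of the already-proven lemmas together with the Isidro description of JB-algebra isometries and the formula $g \circ \exp = \exp \circ\, g_*$.
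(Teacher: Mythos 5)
Your proof is correct and follows essentially the same route as the paper: reduce to $g(e)=e$, invoke the JB-algebra isometry description of \cite{isidro1995isometries} to write $g_*=b\cdot\Phi$, absorb $\Phi$ into $\auto{\mathcal{C}}$ (any Jordan isomorphism preserves the cone of squares), identify $b=\sum_i\epsilon_ie_i$ via the central-symmetry lemma, and translate back through $g\circ\exp=\exp\circ\,g_*$. One small caution: your parenthetical claim that it suffices for $\Phi$ to merely stabilize each factor does not by itself yield the stated conclusion $x\mapsto bx$ (a nontrivial factor-wise automorphism would survive); you must absorb all of $\Phi$ into $\sigma^*$, which --- as you correctly note in passing --- is always possible since $\Phi^{-1}$ is itself an algebra isomorphism restricting to an element of $\auto{\mathcal{C}}$.
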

\begin{rmk}
  The map $a_i\mapsto a_i^{-1}$ is just the geodesic inversion at $e_i$ of the symmetric space associated to the simple factor $J_i$ with unit $e_i$. 
\end{rmk}
\begin{rmk}\label{rmk:index}
  Let $n$ be the number of distinct isomorphism classes of the simple factors of $J$. Let us order those isomorphism classes arbitrarily from $1$ to $n$. Suppose that there are $k_i\geq1$ distinct simple factors of $J$ that represent the class numbered $i$. Then the automorphism group is easily seen to have index at most
  \begin{equation*}
    \prod_{1\leq i\leq n}\sum_{0\leq j\leq k_i} ( k_i+1 ) = k+n,
%\binom{k_i}{j}=2^{k_1+\cdots+k_n}=2^{k},
  \end{equation*}
  where $k$ is the number of simple factors of $J$. Indeed this is a upper bound on the number of central symmetries with disjoint orbits under permutation of isometric simple factors of $J$. 
\end{rmk}
We will see in the next section  that there are less isometries for the Hilbert metric as soon as the algebra is not simple. This comes from the fact that products of Thompson isometries are again Thompson isometries, whereas the analogous statement does not hold for Hilbert metrics. Indeed, for the Thompson metric, we have the
\begin{prop}
 Let $\mathcal{C}$ be a product of cones $\mathcal{C}_i$, $1\leq i\leq n$. Let $d$ (resp. $d_i$) be the Thompson metric associated with $\mathcal{C}$ (resp. associated with $\mathcal{C}_i$). Then
 \begin{equation*}
   d=\sup_{1\leq i\leq n} d_i.
 \end{equation*}
\end{prop}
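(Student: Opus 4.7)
The plan is to go directly from the definition $M(P,Q)=\inf\{t>0\mid tQ-P\in\mathcal{C}\}$. Writing $P=(P_1,\dots,P_n)$ and $Q=(Q_1,\dots,Q_n)$ in $\mathcal{C}=\prod_i\mathcal{C}_i$, the starting observation is that membership in a product cone is componentwise: $tQ-P\in\mathcal{C}$ if and only if $tQ_i-P_i\in\mathcal{C}_i$ for every $i$. Consequently the infimum over admissible $t$ in the product is the supremum of the infima in each factor, i.e.\
\begin{equation*}
\M{P}{Q}=\max_{1\le i\le n}\M{P_i}{Q_i}.
\end{equation*}
A tiny care point here is to check that the infimum is attained (so that ``sup of inf = inf of the joint condition''), which follows because the $\mathcal{C}_i$ are open and $t\mapsto tQ_i-P_i$ is continuous: the set of valid $t$ for each factor is an open interval $(M(P_i,Q_i),\infty)$, and the intersection of finitely many such intervals is $(\max_i M(P_i,Q_i),\infty)$.

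Then I just plug into the definition of the Thompson metric and use that on positive reals $\log$ is monotone and that $\max$ commutes with itself:
\begin{align*}
\dthom{P}{Q}&=\log\max\bigl(\M{P}{Q},\M{Q}{P}\bigr)\\
&=\log\max\Bigl(\max_i\M{P_i}{Q_i},\ \max_i\M{Q_i}{P_i}\Bigr)\\
&=\max_i\log\max\bigl(\M{P_i}{Q_i},\M{Q_i}{P_i}\bigr)\\
&=\max_{1\le i\le n} d_i(P_i,Q_i).
\end{align*}
This is the claimed identity $d=\sup_i d_i$ (which is in fact a maximum since $n$ is finite).

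There is essentially no obstacle: the proof is a direct unwinding of definitions, and the only subtlety worth mentioning is the componentwise characterization of the product cone, which justifies turning an infimum over a conjunction of conditions into a maximum of infima. It is also worth noting in passing why the analogous identity fails for the Hilbert metric: in the Hilbert case one takes $\log\bigl(M(P,Q)M(Q,P)\bigr)$, and the product of two maxima is not the maximum of the products, so the computation above does not go through, explaining the asymmetry foreshadowed in the paragraph preceding the Proposition.
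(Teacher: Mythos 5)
Your proof is correct and follows essentially the same route as the paper: the paper's argument is exactly the computation $\M{P}{Q}=\sup_{1\leq i\leq n}\M{P_i}{Q_i}$ obtained from the componentwise characterization of membership in the product cone, followed by plugging into the definition of $d_T$. Your added justification that the set of admissible $t$ in each factor is an open ray, and your closing remark on why the identity fails for the Hilbert metric, are correct but not needed beyond what the paper records.
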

\begin{proof}
Direct consequence of the following computations
  \begin{align*}
    \MC{C}{P}{Q}&=\inf\{\,t>0\mid tQ-P\in \mathcal{C}\,\}\\
    &=\inf\{\,t>0\mid\forall 1\leq i\leq n,\  tQ_i-P_i\in \mathcal{C}_i\,\}\\
    &=\sup_{1\leq i\leq n}\inf\{\,t>0\mid tQ_i-P_i\in \mathcal{C}_i\,\}\\
    &=\sup_{1\leq i\leq n}\MC{C_i}{P_i}{Q_i}.
  \end{align*}
\end{proof}
\begin{rmk}
It follows from this Proposition that the map $a=\sum_{1\leq i\leq n}a_i\mapsto\sum_{1\leq i\leq n}a_i^{\epsilon_i}$ (for some $\epsilon_i\in\{\pm1\}$) is an isometry for the Thompson metric (because geodesic inversions are). From this it follows easily that the index of the automorphism group in the full isometry group is exactly equal to $k+n$ (the notations are those of the remark \ref{rmk:index} ).
\end{rmk}
\section{Case of the Hilbert metric}
This case requires substantially more work than the case of the Thompson metric. The reason is that though we associated some linear map $h^*$ to every isometry fixing the origin $e$, this map is not defined on $J$ but on the hyperplane $J_0$ of $J$. Moreover, even though $h^*$ is an isometry for some norm, this norm is not the restriction to $J_0$ of the JB-norm of $J$. This problem was already encountered in \cite{molnar2003linear} and we will begin our proof likewise. However the Jordan algebra considered in \cite{molnar2003linear} is both simple and exceptional so we have to proceed differently. Remark that our proof is not considerably longer than the one in the aforementioned paper.
\begin{defi}
  We note $\bar{J}$ the quotient vector space $J/(\R e)$. The class of $u\in J$ is noted $[u]$.
\end{defi}
$\bar{J}$ is naturally linearly isomorphic to $J_0$ but it is sometimes better to work with $\bar{J}$. We provide $\bar{J}$ with a norm through this identification.
\begin{lemma}\label{lem:eig}
  The lower (resp. upper) eigenvalue of $L(x)$ is equal to that of $x$.
\end{lemma}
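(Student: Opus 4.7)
The plan is to reduce the claim to a direct computation via the Peirce decomposition relative to the spectral frame of $x$. Write the spectral decomposition $x=\sum_{i=1}^r \lambda_i p_i$, with $(p_i)_{i=1}^r$ a Jordan frame. The key algebraic fact I would invoke is that mutually orthogonal idempotents $p_i$ have pairwise commuting multiplication operators $L(p_i)$; these operators are self-adjoint for the trace form, satisfy $L(p_i)^2 = L(p_i) - \tfrac{1}{2}L(p_i) + \tfrac{1}{2}L(p_i)$ \ldots more concisely, $2L(p_i)^3-3L(p_i)^2+L(p_i)=0$, so each $L(p_i)$ is diagonalizable with spectrum in $\{0,\tfrac12,1\}$, and being commuting self-adjoint operators they admit a simultaneous diagonalization. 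This is exactly the classical Peirce decomposition
\begin{equation*}
J=\bigoplus_{1\leq i\leq j\leq r} J_{ij},
\end{equation*}
where $J_{ii}=\R p_i$ and, for $i<j$, $J_{ij}=\{u\in J:L(p_i)u=L(p_j)u=\tfrac12 u,\ L(p_k)u=0\ \text{for }k\notin\{i,j\}\}$.

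From linearity of $L$ in its argument, $L(x)=\sum_k \lambda_k L(p_k)$ acts on each Peirce block $J_{ij}$ as multiplication by $\tfrac{\lambda_i+\lambda_j}{2}$ (and on $J_{ii}=\R p_i$ as multiplication by $\lambda_i$). Consequently the spectrum of $L(x)$ is contained in $\{(\lambda_i+\lambda_j)/2:1\leq i\leq j\leq r\}$, and in any case contains $\{\lambda_1,\dots,\lambda_r\}$, because the diagonal blocks $J_{ii}$ are always nontrivial (they contain $p_i$).

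To finish, observe that any element of $\{(\lambda_i+\lambda_j)/2\}$ lies between $\min_k \lambda_k$ and $\max_k \lambda_k$, while both extremes $\min_k\lambda_k$ and $\max_k\lambda_k$ are themselves attained as eigenvalues of $L(x)$ via the diagonal $J_{ii}$. Therefore the smallest eigenvalue of $L(x)$ is $\min_k\lambda_k=\min\spec(x)$ and the largest is $\max_k\lambda_k=\max\spec(x)$, which is exactly the statement of the lemma.

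The only delicate point is citing the Peirce simultaneous diagonalization correctly; this is a standard result in Faraut--Koranyi, and in particular does not require $J$ to be simple, so the argument handles the general (possibly reducible) case that the paper works in. No separate treatment of simple factors is needed because the Peirce decomposition is intrinsic to the chosen Jordan frame of $x$.
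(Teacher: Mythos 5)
Your argument is correct and is essentially the paper's own proof written out in full: the paper likewise reduces to the fact that $\spec L(p)\subseteq\{0,\tfrac12,1\}$ for an idempotent $p$ and then appeals to the spectral decomposition, which is precisely the Peirce-block computation (eigenvalue $(\lambda_i+\lambda_j)/2$ on $J_{ij}$, extremes attained on the diagonal blocks) that you make explicit. The only blemish is the garbled intermediate identity $L(p_i)^2=L(p_i)-\tfrac12 L(p_i)+\tfrac12 L(p_i)$, which should simply be deleted in favour of the correct cubic $2L(p_i)^3-3L(p_i)^2+L(p_i)=0$ that you state immediately afterwards.
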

\begin{proof}
  It is well known (see \cite{faraut1994analysis} for example) that the Lemma holds when $x$ is an idempotent since then (putting the two trivial cases aside) the eigenvalues of $x$ are $0$ and $1$ and those of $L(x)$ are among $0$, $1/2$ and $1$. The general case follows from this remark and the spectral decomposition Theorem.
\end{proof}
\begin{rmk}
In fact the eigenvalues of $L(x)$ can be deduced from that of $x$. Indeed in the Proposition $2$.$1$ of \cite{kim2006jordan} it is proved that when $J$ is simple the eigenvalues of $L(x)$ are  precisely the $(\lambda_i+\lambda_j)/2$ for $i\neq j$ (where the $\lambda_i$ are the eigenvalues of $x$). The general case follows from this one by splitting $J$ into simple algebras. Indeed if $J=J_1\times\cdots J_n$ is such a splitting and $x=(x_1,\ldots,x_n)\in J$ then the eigenvalues of $x$ are those of the $x_i$ and the eigenvalues of $L(x)$ are those of the $L(x_i)$.
\end{rmk}
\begin{cor}\label{cor:eig}
  If $x\in J$ has eigenvalues contained in $[0,1]$ then, with respect to the usual partial ordering of symmetric operators, $0\leq L(x)\leq Id$ (where $Id$ is teh identity mapping of $J$) i.e.
  \begin{align*}
    \forall y\in J,\ 0\leq(L(x)y,y)\leq \|y\|^2.
  \end{align*}
\end{cor}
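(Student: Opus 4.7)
The plan is to combine Lemma \ref{lem:eig} with the self-adjointness of $L(x)$ and the standard Rayleigh quotient bound for symmetric operators. I will just use the ordinary partial order interpretation of $0\leq L(x)\leq \mathrm{Id}$, which is what the displayed inequality records (with the norm on the right-hand side understood as the one coming from the associative trace form, so $\|y\|^{2}=(y,y)$).

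First I would verify that $L(x)$ is symmetric with respect to the trace inner product $(\cdot,\cdot)$. This is a direct consequence of associativity of the bilinear form, already recalled in the preliminaries: for any $y,z\in J$ one has $(L(x)z,y)=(xz,y)=(z,xy)=(z,L(x)y)$, where I also use commutativity of the Jordan product. So $L(x)$ is a self-adjoint endomorphism of the Euclidean space $(J,(\cdot,\cdot))$.

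Next I would apply Lemma \ref{lem:eig}: the smallest and largest eigenvalues of $L(x)$ coincide with the smallest and largest eigenvalues of $x$. Since the spectrum of $x$ is by hypothesis contained in $[0,1]$, both $\min\spec L(x)\geq 0$ and $\max\spec L(x)\leq 1$. Because $L(x)$ is self-adjoint, the entire spectrum of $L(x)$ lies in the interval bounded by its extremal eigenvalues, hence in $[0,1]$.

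Finally I would invoke the Rayleigh quotient inequality for self-adjoint operators: if $A$ is symmetric on a Euclidean space then for every $y$ one has
\begin{equation*}
(\min\spec A)\,(y,y)\leq (Ay,y)\leq (\max\spec A)\,(y,y).
\end{equation*}
Applied to $A=L(x)$ this yields $0\leq (L(x)y,y)\leq (y,y)$ for all $y\in J$, which is the stated conclusion. There is no real obstacle; the content of the corollary is entirely encoded in the preceding lemma, and the only thing to check on top of it is self-adjointness of $L(x)$, which is immediate from the associativity of the trace form.
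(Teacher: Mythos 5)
Your proof is correct and is exactly the argument the paper intends (the corollary is stated without proof as an immediate consequence of Lemma \ref{lem:eig}): self-adjointness of $L(x)$ for the trace form, which follows from associativity of $(\cdot,\cdot)$, combined with the Rayleigh quotient bound for the extremal eigenvalues supplied by the lemma. Your decision to read $\|y\|^{2}$ as $(y,y)$ is the right one --- with the spectral (JB) norm the displayed inequality would already fail for $x=y=e$ when the rank is at least two, since then $(L(e)e,e)=\trace{e}=r>1=\|e\|^{2}$.
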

\begin{prop}
  The extremal points of the unit ball of $\bar{J}$ are exactly the classes $[p]$ of the non trivial idempotents of $J$.
\end{prop}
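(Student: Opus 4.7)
The plan is to prove both inclusions separately. Throughout, I use that the norm on $\bar J$ satisfies $\|[u]\| = \max_i \lambda_i - \min_i \lambda_i$ for the eigenvalues $\lambda_i$ of any representative $u$ (since adding a multiple of $e$ shifts all eigenvalues equally), so every class in the unit ball has a representative with spectrum in $[0,1]$, and every class on the unit sphere has a representative whose spectrum contains both $0$ and $1$.

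For the forward direction (non-trivial idempotents are extremal), I would take a non-trivial idempotent $p$ and suppose $[p] = \tfrac{1}{2}([a]+[b])$ with $[a], [b]$ in the unit ball. I would choose representatives with $0 \leq a, b \leq e$; the equation in $\bar J$ becomes $p = \tfrac{1}{2}(a+b) + \gamma e$ for some $\gamma \in \R$. Since $0 \leq \tfrac{1}{2}(a+b) \leq e$, comparing the spectrum $\{-\gamma, 1-\gamma\}$ of $p - \gamma e$ with $[0,1]$ forces $\gamma = 0$, so $2p = a + b$. I then apply the operator $L$: Corollary~\ref{cor:eig} gives $0 \leq L(a), L(b) \leq Id$ while $L(a) + L(b) = 2L(p)$. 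For $v$ in the Peirce subspace $J(1,p) = \{x \in J : px = x\}$ we have $(L(p) v, v) = (v, v)$, and the identity $(L(a) v, v) + (L(b) v, v) = 2(v, v)$ with each summand at most $(v, v)$ forces $L(a) v = L(b) v = v$. Symmetrically, $L(a) v = L(b) v = 0$ on $J(0,p) = \{x \in J : px = 0\}$. Writing $a = a_1 + a_{1/2} + a_0$ in Peirce components for $p$, the Peirce multiplication rules combined with the identities $av = v$ on $J(1,p)$ and $av = 0$ on $J(0,p)$ force $a_1 = p$ (it is the identity of the subalgebra $J(1,p)$), $a_0 = 0$, and $a_{1/2} \cdot p = 0$; since $a_{1/2} \cdot p = \tfrac{1}{2} a_{1/2}$, also $a_{1/2} = 0$. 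Hence $a = p$, and symmetrically $b = p$.

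For the converse direction, I take $[u]$ extremal, represented by $u$ with spectrum in $[0,1]$ containing both $0$ and $1$. Using a spectral decomposition $u = \sum_i \lambda_i p_i$, I set $u_1 = \sum_i \max(2\lambda_i - 1, 0) \, p_i$ and $u_2 = \sum_i \min(2\lambda_i, 1) \, p_i$; both have spectrum in $[0,1]$ (so lie in the unit ball of $\bar J$) and $\tfrac{1}{2}(u_1 + u_2) = u$. Extremality then forces $u_1 - u \in \R e$, i.e., every eigenvalue of $u$ lies in a common two-element set $\{-\alpha, 1 + \alpha\}$ for some $\alpha \in \R$; the presence of $0$ and $1$ in the spectrum pins this set down to $\{0, 1\}$, so $u$ is a non-trivial idempotent. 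The main obstacle will be the rigidity step in the forward direction, namely deducing $a = p$ from $2p = a + b$ with $0 \leq a, b \leq e$, since $a$ and $p$ need not commute in the Jordan algebra a priori; Corollary~\ref{cor:eig} is the key lever, converting the order inequalities into operator inequalities that interact cleanly with the Peirce decomposition of $J$ with respect to $p$.
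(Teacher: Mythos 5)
Your proof is correct, and while it follows the same broad skeleton as the paper's (normalize representatives to have spectrum in $[0,1]$, eliminate the scalar ambiguity coming from $\R e$, then use the operator inequality $0\leq L(x)\leq Id$ of Corollary~\ref{cor:eig}), it deviates at the two decisive steps, in both cases in a way that makes the argument more self-contained. First, to show the scalar $\gamma$ vanishes in $p=\tfrac{1}{2}(a+b)+\gamma e$, you simply compare the two-point spectrum $\{-\gamma,1-\gamma\}$ of $p-\gamma e$ with the fact that $\tfrac{1}{2}(a+b)$ lies in the order interval $[0,e]$; the paper instead evaluates the bilinear form $(L(p)\,\cdot\,,\cdot\,)$ at $p$ and at $p'=e-p$ to squeeze $\lambda\geq 0$ and $\lambda\leq 0$. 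Both work; yours is shorter and avoids the trace-normalization bookkeeping. Second, and more substantially, once reduced to $2p=a+b$ with $0\leq a,b\leq e$, the paper concludes by citing Wright--Youngson (\cite{wright1978isometries}) for the fact that projections are the extreme points of $[0,e]$, whereas you reprove exactly this fact on the spot: the saturation of $(L(a)v,v)\leq(v,v)$ on the Peirce space $J(1,p)$ and of $(L(a)v,v)\geq 0$ on $J(0,p)$ forces $L(a)$ to act as the identity and as zero there respectively, and the Peirce multiplication rules then pin down $a_1=p$, $a_{1/2}=0$, $a_0=0$. This buys independence from the external reference at the cost of a page of Peirce computations. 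For the converse, your symmetric ``tent-map'' splitting $u=\tfrac{1}{2}(u_1+u_2)$ with $u_1=\sum\max(2\lambda_i-1,0)p_i$ and $u_2=\sum\min(2\lambda_i,1)p_i$ treats all interior eigenvalues at once and reads off the conclusion directly from $u_1-u\in\R e$; the paper perturbs a single interior eigenvalue up and down, which is the same idea phrased contrapositively (and, as written there, with a small arithmetic slip in the convex coefficients that your version does not have).
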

\begin{proof}
Adapted from the Lemma~$2$ in \cite{molnar2003linear}.  Let $p$ be an idempotent and let us show that $[p]$ is an extreme point of the unit ball. Let us write $[p]=t[a]+(1-t)[b]$ for some $t\in]0,1[$ and $(a,b)\in J^2$ with $|a|_{\sigma}=|b|_{\sigma}=1$. Hence for some additional $\lambda\in\R$ we have $p=ta+(1-t)b+\lambda e$ and we can always assume that the spectrum of both $a$ and $b$ is contained in $[0,1]$ and that $0$ and $1$ are eigenvalues of both $a$ and $b$. Hence according to the Corollary \ref{cor:eig} we have $0\leq L(a)\leq Id$ and $0\leq L(b)\leq Id$, from which we deduce that
  \begin{align*}
    ( L(p)p,p)=1&=t( L(a)p,p)+(1-t)( L(b)p,p) +\lambda\|p\|^2\\
    &\leq t\|p\|^2+(1-t)\|p\|^2+\lambda\|p\|^2\\
    &\leq1+\lambda\|p\|^2,
  \end{align*}
and so $\lambda\geq0$. Since $p$ is not trivial, there exists some non trivial idempotent $q\in J$ such that $pq=0$ (take $q=p^\prime=e-p$ for example). But then using the Corollary \ref{cor:eig} again we obtain this time
\begin{equation*}
     ( L(p)q,q)=0=t( L(a)q,q)+(1-t)( L(b)q,q) +\lambda\|q\|^2\geq \lambda\|q\|^2, 
\end{equation*}
so that finally $\lambda=0$ and $p=ta+(1-t)b$. But it is proved in \cite{wright1978isometries} (see also the remark after the Theorem $1$.$1$ in \cite{isidro1995isometries} ) that the projections are the extreme points of the interval $[0,1]$ (i.e. of elements with spectrum contained in $[0,1]$) so that we must have $p=a=b$ and $p$ is indeed an extreme point of the unit ball.

Any class with unit norm of $\bar{J}$ is represented by an element $v\in J$ with sprectum contained in $[0,1]$ and containing both $0$ and $1$. Moreover the class $[v]$ of such an element contains an idempotent if and only if $v$ is itself idempotent. Assume that $v$ is not. Then some of its eigenvalues lie in $]0,1[$ and we can write $v=\lambda_1 p_1+\sum_{2\leq i\leq r}\lambda_i p_i$ with $0<\lambda_1<1$ and $0\leq\lambda_i\leq 1$. But then $v=\alpha d+(1-\alpha) f$ where $\alpha=1-\lambda_1$, $f=\lambda_1/2p_1+\sum_{2\leq i\leq r}\lambda_i p_i$ and $d= (\lambda_1/2+1/2)p_1+\sum_{2\leq i\leq r}\lambda_i p_i$. Obviously $[d]$ and $[f]$ are still in the unit ball of $\bar{J}$ and are distinct so that $[v]$ is not an extreme point of the unit ball.
\end{proof}
\begin{lemma}
  Assume that some element $u\in J$ can be written $u=p+\lambda e$ for some non-trivial idempotent $p$. Then $p$ and $\lambda$ are well defined  and depend continuously on $u$.
\end{lemma}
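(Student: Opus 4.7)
The plan is to extract $\lambda$ and $p$ from $u$ via the spectrum, then to invoke the continuity of the spectrum (the Lemma proved just before this paragraph) to conclude.

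First I would analyze the spectrum of a $u$ of the given form. Fix a Jordan frame $(p_i)_{1\leq i\leq r}$ in which $p$ is diagonal; since $p$ is idempotent we can write $p=\sum_{i\in I}p_i$ for some proper non-empty subset $I\subsetneq\{1,\dots,r\}$ (non-triviality of $p$ gives $I\neq\emptyset$ and $I\neq\{1,\dots,r\}$). Then
\begin{equation*}
u = p+\lambda e = \sum_{i\in I}(1+\lambda)p_i+\sum_{i\notin I}\lambda p_i,
\end{equation*}
which is the spectral decomposition of $u$. Thus $\spec(u)=\{\lambda,\lambda+1\}$, with both values actually attained. In particular $\lambda=\min\spec(u)$ and $\lambda+1=\max\spec(u)$, so $\lambda$ is uniquely determined by $u$, and then $p=u-\lambda e$ is too. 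This settles the well-definedness.

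For continuity, let $u_n\to u$ with each $u_n=p_n+\lambda_n e$ and $u=p+\lambda e$ in the required form. By the previous computation, $\lambda_n=\min\spec(u_n)$ and $\lambda=\min\spec(u)$. The preceding Lemma states that the spectrum depends continuously on the element (via continuous dependence of the roots of the characteristic polynomial); since the minimum of a continuously varying finite multiset of real numbers is itself continuous, we conclude $\lambda_n\to\lambda$. Then $p_n=u_n-\lambda_n e\to u-\lambda e=p$, establishing the joint continuity $u\mapsto(p,\lambda)$ on the subset of $J$ where such a decomposition exists.

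There is no real obstacle here: the only point to be careful about is that ``continuity of the spectrum'' as stated in the previous Lemma refers to the (multi)set of roots of the characteristic polynomial, so one should verify that extracting $\min\spec$ is a continuous operation on that space — which is immediate since the Hausdorff distance on finite subsets of $\R$ controls the minimum. No classification of simple Jordan algebras is required, and the argument works uniformly across all euclidean Jordan algebras.
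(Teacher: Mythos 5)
Your proposal is correct and follows essentially the same route as the paper: both identify $\spec(u)=\{\lambda,1+\lambda\}$ (using non-triviality of $p$ and of $e-p$ to ensure both eigenvalues occur), recover $\lambda$ as the smallest eigenvalue, set $p=u-\lambda e$, and conclude continuity from the continuous dependence of the roots of the characteristic polynomial. The only cosmetic difference is that you pass through a Jordan frame for $p$, whereas the paper writes $u=(1+\lambda)p+\lambda(e-p)$ directly.
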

\begin{proof}
Remark that $p^\prime=e-p$ is a non-trivial idempotent orthogonal to $p$ and that
\begin{align*}
  u=p+\lambda e=p+\lambda(p+(e-p))=(1+\lambda)p+\lambda (e-p).
\end{align*}
Hence the eigenvalues of $u$ are exactly $\lambda$ and $1+\lambda$. So $\lambda$ is the smallest eigenvalue of $u$ and depends continuously on $u$ by continuous dependence of the roots of polynomials. But then $p=u-\lambda e$ also depends continuously on $u$.
%   For $x\in\R$ we have
%   \begin{align*}
%     \Trace{(u+x e)^2}&=\Trace{(p+(x+\lambda) e)^2}\\
%     &=(1+2(x+\lambda))\Trace{p}+(x+\lambda)^2 \Trace{e} \\
%     &=\Trace{e}x^2+(2\lambda\Trace{e}+2\Trace{p})x+\lambda^2\Trace{e}+(1+2\lambda)\Trace{p}.
%   \end{align*}
% The polynomial $\Trace{(u+Xe)^2}$ obviously depends continuously on $u$. But then we can determine continuously the coefficients of this second order polynomial function. The second coefficient gives us $\Trace{p}$ and then the third one gives us $2\lambda\Trace{e}+2\Trace{p}$. But $\Trace{u}=\lambda\Trace{e}+2\Trace{p}$ Since $p=u-\lambda e$, p must also depend continuously on $u$.
\end{proof}
Let $h_*$ be any isometry of $J_0$. Let us extend $h_*$ linearly to $J$ by sending $e$ to itself and write $\hat{h}_*$ for the extended map. Then  $\hat{h}_*$ is still an isometry (on the whole of $J$) for $|\cdot|_{\sigma}$, though $|\cdot|_{\sigma}$ is not a norm anymore (it is degenerate since for example $|e|_{\sigma}=0$). If $p$ is a non trivial idempotent of $J$ then we can write $\hat{h}_*(p)=q+\lambda e$ for some non trivial idempotent q and $\lambda\in\R$, and  $q$ depends continuously on $p$ according to the previous Lemma. Let us write $q=f(p)$ for convenience. Then $f$ is a continuous function from the set $\idem{J}$ of non trivial idempotents of $J$ to itself. The same reasoning with the inverse map $h_*^{-1}$ immediately yields that $f$ is a homeomorphism of $\idem{J}$ and hence preserves the connected components of $\idem{J}$.
\begin{defi}
  For any Jordan algebra $J$ let $\idemrank{k}{J}$ be the set of idempotent of fixed rank $k$, where $k$ is an integer and $J$ is any Jordan algebra. For convenience we put $\idemrank{k}{J}=\{0\}$ if $k\leq0$ and $\idemrank{k}{J}=\{e\}$ if $k\geq\rank{J}$. 
\end{defi}
\begin{lemma}
  Let $J=J_1\times\cdots J_n$ be the decomposition into simple Jordan  algebras of $J$. Then the connected components of $\idem{J}$ are exactly the products $\prod_{i=1}^n \idemrank{k_i}{J_i}$ where $k_i$ is an arbitrary integer. In particular the connected components of $\idem{J_i}$, $1\leq i\leq n$, is the set of idempotents with fixed rank. 
\end{lemma}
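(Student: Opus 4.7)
The plan is to reduce the claim to the simple case via the componentwise structure of idempotents, and then to prove that each rank-stratum in a simple factor is connected. First, since $J=J_1\times\cdots\times J_n$ is a product of Jordan algebras, an element $p=(p_1,\ldots,p_n)\in J$ satisfies $p^2=p$ if and only if each $p_i$ is idempotent in $J_i$. Hence the set of all idempotents of $J$ factors as $\prod_{i=1}^n\bigl(\idem{J_i}\cup\{0,e_i\}\bigr)$, and $\idem{J}$ is this full product with the two global trivial idempotents $0$ and $e$ deleted.

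Next I would show that the rank is locally constant on idempotents. If $p\in J_i$ is an idempotent of rank $k$, then by the spectral decomposition $p$ is a sum of $k$ pairwise orthogonal primitive idempotents (completed to a Jordan frame by primitive orthogonal idempotents summing to $e_i-p$), so $\trace{p}=k$. Since $\trace{\cdot}$ is a continuous linear form on $J_i$ taking integer values on idempotents, rank is locally constant on $\idem{J_i}$. Componentwise, the rank vector is locally constant on idempotents of $J$, and therefore each connected component of the set of all idempotents of $J$ is contained in some product $\prod_{i=1}^n\idemrank{k_i}{J_i}$ with $0\leq k_i\leq\rank{J_i}$.

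The main step, which I expect to be the principal obstacle, is to show that for simple $J_i$ and each $0\leq k\leq\rank{J_i}$, the stratum $\idemrank{k}{J_i}$ is connected. I would deduce this from the classical fact that, for a simple euclidean Jordan algebra, the connected identity component $G_i$ of the Jordan algebra automorphism group acts transitively on rank-$k$ idempotents. This follows from transitivity on ordered Jordan frames (Chapter~IV of \cite{faraut1994analysis}) combined with standard Peirce decomposition arguments used to upgrade transitivity from the full automorphism group to the identity component. Consequently $\idemrank{k}{J_i}$ is the continuous image of the connected group $G_i$ under an orbit map, and is therefore connected. A direct verification is also available using the classification of simple euclidean Jordan algebras, identifying each stratum with a classical Grassmannian, an octonionic projective plane, or a sphere (in the spin factor case).

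Finally, products of connected sets are connected, so each $\prod_{i=1}^n\idemrank{k_i}{J_i}$ is connected, and together with the local constancy of the rank vector these products are exactly the connected components of the set of all idempotents of $J$. Excising the two degenerate tuples $(0,\ldots,0)$ and $(\rank{J_1},\ldots,\rank{J_n})$, which correspond respectively to $\{0\}$ and $\{e\}$, yields the connected components of $\idem{J}$. The particular case $n=1$ gives the second assertion of the lemma.
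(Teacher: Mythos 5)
Your proof is correct and follows essentially the same route as the paper: reduce to a simple factor and invoke the transitivity of the connected identity component of the Jordan automorphism group on idempotents of fixed rank (Proposition IV.3.1 in \cite{faraut1994analysis}), so that each rank stratum is the continuous image of a connected group and hence connected. You additionally spell out the componentwise factorisation of the idempotent set and the local constancy of rank via the integer-valued trace, points the paper leaves implicit but which are needed to conclude that the strata are the \emph{full} connected components.
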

\begin{proof}
  One only needs consider the case of a simple Jordan algebra. But then the connected component $K$ of the identity in the isomorphism group of the Jordan algebra $J$ acts transitively on the set of idempotents with fixed rank (See the Proposition\setcounter{longtmp}{4}~\Roman{longtmp}.$3$.$1$, \setcounter{longtmp}{3}(\roman{longtmp}) in \cite{faraut1994analysis} ). The Lemma follows since $K$ is connected.
\end{proof}
\begin{prop}\label{prop:conncomp}
Let $k\in[0,r]$. Then the image by $f$ of a connected component of $\idemrank{k}{J}$ lies in $\idemrank{k}{J}$ or in $\idemrank{r-k}{J}$.
\end{prop}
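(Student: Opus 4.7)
The plan is to pin down the scalar $\lambda(p)$ arising from the decomposition $\hat{h}_*(p) = f(p) + \lambda(p)\,e$ on each connected component. First I would observe that $\lambda(p)$ is the smaller eigenvalue of $\hat{h}_*(p)$ (its spectrum being $\{\lambda(p),\,\lambda(p)+1\}$ with the appropriate multiplicities), hence depends continuously on $p$ and is therefore constant on each connected component $C$ of $\idem{J}$. Call this common value $\lambda_C$. Because $\hat{h}_*$ maps $J_0$ onto itself and fixes $e$, it preserves the trace functional, and taking traces in $\hat{h}_*(p) = f(p) + \lambda_C\,e$ yields $\rank(f(p)) = k - r\lambda_C$ for $p \in C$ of rank $k$. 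Thus $f(C) \subset \idemrank{k - r\lambda_C}{J}$, and the proposition reduces to $r\lambda_C \in \{0,\,2k - r\}$.

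To establish this dichotomy I would use the fact that $\hat{h}_*$ preserves simultaneous diagonalisability of pairs in $J$: this follows from the preservation of the spectral mean established in Section~3 together with the last Proposition of Section~1 (translating between $J$ and $\mathcal{C}$ via the exponential and noting that a shift by a scalar multiple of $e$ is invisible to simultaneous diagonalisability). Applied to a Jordan frame $(q_j)_{j=1}^{r_i}$ of a simple factor $J_i$, viewed as a family of orthogonal primitive idempotents of $J$, the images $f(q_j)$ are pairwise simultaneously diagonalisable non-trivial idempotents of $J$, hence sit inside a common Jordan frame $(p_\alpha)_{\alpha=1}^{r}$ of $J$. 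The linear identity $\sum_j \hat{h}_*(q_j) = \hat{h}_*(e_i)$ then places combinatorial constraints on the decomposition of the $f(q_j)$ in $(p_\alpha)$; crucially, the rank-$r_i$ element $e_i$ lies in a single-point connected component of $\idem{J}$, so $f(e_i)$ is forced into another single-point component, severely restricting the target.

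The main step is a trace computation carried out factor by factor in the decomposition $J = \prod_k J_k$: matching the $J_k$-component of $\sum_j f(q_j)$ against that of $\hat{h}_*(e_i)$ forces the ratio $\rank_k(f(q_j))/r_k$ to be independent of $k$ outside of a distinguished "target" factor. Combined with the complement relation $f(p') = f(p)'$ (itself immediate from linearity of $\hat{h}_*$ together with $\hat{h}_*(e) = e$), with the dimensional constraint that $C$ and $f(C)$ are homeomorphic, and with the bounds forced by $\rank_k(f(q_j)) \in \{0,\ldots,r_k\}$, only the two extremal solutions survive, corresponding to $\lambda_{C_1} = 0$ (rank preserved) and $\lambda_{C_1} = (2-r)/r$ (rank complemented) on the primitive component $C_1$ of $J_i$. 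The extension from primitive to arbitrary components of rank $k$ is then obtained by decomposing any representative $p \in C$ as an orthogonal sum of primitive idempotents and propagating the two-valued constraint via linearity of $\hat{h}_*$. The main obstacle, and the reason the argument in \cite{molnar2003linear} does not directly apply, is precisely this combinatorial bookkeeping across the different simple factors of $J$: $\hat{h}_*$ need not respect the product decomposition, and the trace accounting per factor is what ultimately rules out all intermediate configurations of $\lambda_C$.
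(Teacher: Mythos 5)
Your reduction in the first paragraph is sound and is a genuinely different starting point from the paper's: writing $\hat{h}_*(p)=f(p)+\lambda_C e$ with $\lambda_C$ locally constant and taking traces to get $\rank(f(p))=k-r\lambda_C$ is correct, and it does reduce the proposition to showing $r\lambda_C\in\{0,2k-r\}$. The problem is that everything after that reduction --- which is exactly where the content of the proposition lives --- is asserted rather than proved. The per-factor trace identity you invoke comes out as $\rank_l(f(q_j))=r_l(\mu+[l\in S])/r_i$ where $f(e_i)=\sum_{l\in S}e_l$; note that the ratio takes \emph{two} values according to whether $l\in S$, and $S$ need not be a singleton (in the ``complemented'' case $f(e_i)$ is $e$ minus a unit, so $|S|=s-1$), so the claim that the ratio is constant ``outside of a distinguished target factor'' is already not right as stated. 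More importantly, integrality of the $\rank_l$, the bounds $0\le\rank_l\le r_l$, and the relation $f(p')=f(p)'$ do \emph{not} by themselves exclude intermediate ranks: for $J$ a product of three simple factors of rank two ($r=6$), the configuration where a primitive idempotent of $J_1$ is sent to $p_1+p_2$ with $p_1,p_2$ primitive in two different factors (total rank $2\notin\{1,5\}$) passes all of these tests and is only killed by comparing the topology (here, the dimension) of the source and target components. Whether the homeomorphism-type constraint finishes the job in \emph{every} configuration of simple factors is a nontrivial case analysis that you never carry out; ``only the two extremal solutions survive'' is the theorem, not a proof of it. Two further gaps: pairwise simultaneous diagonalisability of the $f(q_j)$ does not immediately give a \emph{common} Jordan frame for the whole family (the paper sidesteps this by working with a regular element $x$ such that $\hat{h}_*(x)$ is also regular, so that ``diagonal in the frame of $x$'' is preserved wholesale); and your final ``extension from primitive to arbitrary components via linearity'' implicitly needs all primitive components to fall into the same case (all rank-preserving or all rank-complementing), which is precisely the Lemma the paper proves \emph{after}, and using, this proposition.

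For contrast, the paper's argument avoids all of this bookkeeping with a single counting invariant: inside a fixed Jordan frame $m$, the set $\Sigm{p}{m}$ of $m$-diagonal idempotents $q$ with $\normdiam{p-q}=1$ has cardinality $2^{\rank{p}}+2^{r-\rank{p}}-2$, and since $\hat{h}_*$ carries $m$-diagonal idempotents bijectively to $m'$-diagonal idempotents while preserving $\normdiam{\cdot}$, this cardinality is preserved; the strict convexity of $x\mapsto 2^x+2^{r-x}$ about $r/2$ then pins down the rank up to $k\mapsto r-k$ in one stroke, uniformly over all decompositions of $J$ into simple factors. If you want to salvage your route, the missing piece is a complete, factor-by-factor elimination of every non-extremal solution of $\rank_l(f(q_j))=r_l(\mu+[l\in S])/r_i$ compatible with $f$ mapping components homeomorphically onto components; as it stands, that elimination is the gap.
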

\begin{lemma}
  Let $m=(m_i)_{1\leq i\leq r}$  be a Jordan frame and $p=\sum_{1\leq i\leq r}\lambda_i m_i$, $\lambda_i\in\{0,1\}$, be a non trivial idempotent which is diagonal in this Jordan frame. Then the set $\Sigm{p}{e}$ of idempotents $q$ which are diagonal in the same Jordan frame $(m_i)_{1\leq i\leq r}$ and that satisfy $\normdiam{p-q}=1$ has cardinality $2^{\rank{p}}+2^{r-\rank{p}}-2$. 
\end{lemma}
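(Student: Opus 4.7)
The plan is to reduce the problem to elementary counting inside the commutative associative subalgebra spanned by the frame $(m_i)_{1 \leq i \leq r}$. First I observe that any idempotent $q$ diagonal in this frame writes as $q = \sum_{i=1}^{r} \mu_i m_i$, and since $m_i m_j = \delta_{ij} m_i$ the condition $q^2 = q$ forces $\mu_i^2 = \mu_i$, i.e.\ $\mu_i \in \{0,1\}$. Consequently $p - q = \sum_{i=1}^{r}(\lambda_i - \mu_i) m_i$ is again diagonal in the same frame, its spectrum is the multiset $\{\lambda_i - \mu_i : 1 \leq i \leq r\} \subseteq \{-1, 0, 1\}$, and $\normdiam{p-q}$ is precisely the diameter of this set of reals.

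The next step is to characterize when $\normdiam{p-q} = 1$. Since the differences lie in $\{-1, 0, 1\}$, the diameter can only be $0$, $1$ or $2$, and it equals $1$ exactly when the multiset of differences is either contained in $\{0, 1\}$ and attains the value $1$, or contained in $\{-1, 0\}$ and attains the value $-1$. These two cases are disjoint, because simultaneous presence of both $-1$ and $+1$ among the differences would immediately force the diameter to be $2$.

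Counting is then immediate. Setting $A = \{i : \lambda_i = 1\}$ and $B = \{i : \lambda_i = 0\}$, so that $|A| = \rank{p}$ and $|B| = r - \rank{p}$: in the first case $\mu_i$ must equal $0$ on $B$ and is free in $\{0,1\}$ on $A$ subject to not all values on $A$ being $1$, which yields $2^{\rank{p}} - 1$ idempotents; symmetrically, the second case forces $\mu_i = 1$ on $A$ and is free on $B$ subject to not all values on $B$ being $0$, yielding $2^{r - \rank{p}} - 1$ idempotents. Summing the two disjoint cases produces the claimed cardinality $2^{\rank{p}} + 2^{r - \rank{p}} - 2$.

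I do not anticipate any genuine obstacle: the argument is pure linear algebra inside the span of the frame. The only two points that deserve a moment's attention are (i) that the trivial idempotents $0$ and $e$ do belong to $\Sigm{p}{e}$ and are already counted, namely $q = 0$ by Case $1$ with all $\mu_i = 0$ on $A$ and $q = e$ by Case $2$ with all $\mu_i = 1$ on $B$, both legitimate because $p$ being non-trivial forces both $A$ and $B$ to be non-empty; and (ii) that the two cases are truly disjoint, which follows from the sign analysis above.
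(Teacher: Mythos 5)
Your proof is correct and follows essentially the same route as the paper's: both reduce to counting $\{0,1\}$-vectors in the given frame and split $\Sigm{p}{e}$ into the two disjoint pieces $\{q\le p,\ q\ne p\}$ and $\{q\ge p,\ q\ne p\}$, of cardinalities $2^{\rank{p}}-1$ and $2^{r-\rank{p}}-1$. If anything, your version is slightly more careful: you justify exhaustiveness and disjointness via the sign analysis of the differences $\lambda_i-\mu_i$ and explicitly verify that $0$ and $e$ belong to the set (using the non-triviality of $p$), points the paper leaves implicit.
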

\begin{proofarg}{Proof of the Lemma}
The set $\Sigm{p}{m}$ is the disjoint union of the two sets $\Sigmun{p}{m}$ and  $\Sigmdeux{p}{m}$ where $\Sigmun{p}{m}$ consists of the non-zero idempotents $q\in\Sigm{p}{e}$ satisfying $q<p$ (i.e. if $q=\sum_{1\leq i\leq r}\mu_i m_i\in\Sigm{p}{e}$ then $q\neq p$ and $\lambda_i=0$ implies $\mu_i=0$) and $\Sigmdeux{p}{m}=p+\Sigmun{e-p}{m}$ (i.e. if $q=\sum_{1\leq i\leq r}\mu_i m_i\in\Sigmdeux{p}{m}$ then $q\neq p$ and $\mu_i=0$ implies $\lambda_i=0$). Then one has
 \begin{align*}
   \card \Sigmun{p}{m}=2^{k}-1=2^{\rank{p}}-1,
 \end{align*}
and so $\card\Sigmdeux{p}{m}=2^{\rank(e-p)}-1=2^{r-\rank{p}}-1$. The Lemma follows since $\Sigmun{p}{m}$ and $\Sigmdeux{p}{m}$ are disjoint.
\end{proofarg}
\begin{proofarg}{Proof of the Proposition}
  The set of regular elements is an open dense subset of $J$ and by continuity so must be its pre-image by $\hat{h}_*$. The intersection of those two open dense sets is certainly not empty and so we can fix a regular element $x\in J$ such that $\hat{h}_*(x)$ is also regular. Let $m=(m_i)_{1\leq i\leq r}$ (resp. $m^\prime=(m^\prime_i)_{1\leq i\leq r}$) be a Jordan frame in which $x$ is diagonal (resp. $\hat{h}_*(x)$), and remark that by regularity the elements that can be diagonalised in the same frame as $x$ (resp. in the same frame as $\hat{h}_*(x)$) are exactly those that are diagonal in the given frame $m$ (resp. in $m^\prime$). We have already proved that $\hat{h}_*$ preserves simultaneous diagonalisation so elements which are diagonal in the frame $m$ are mapped to elements which are diagonal in $m^\prime$. Moreover the set of idempotents diagonal in the frame $m$ intersects all connected components of $\mathcal{P}(J)$.  Since $\hat{h}_*$ preserves the semi-norm $\normdiam{\cdot}$, so does $f$, and we can now infer that if $0<k<r$
  \begin{align*}
    f(\Sigm{\sum_{1\leq i\leq k}m_i}{m})=\Sigm{f(\sum_{1\leq i\leq k}m_i)}{m^\prime},
  \end{align*}
and in particular those two sets have the same cardinality $2^k+2^{r-k}-2$. But one easily checks that for $(x,y)\in[0,r]^2$,
\begin{align*}
  2^x+2^{r-x}-2=2^y+2^{r-y}-2\Leftrightarrow x=y\quad\textrm{or}\quad x=r-y.
\end{align*}
Indeed, the function $x\mapsto 2^x+2^{r-x}-2$ is symmetric around $r/2$, strictly decreasing on $]-\infty,r/2[$ and strictly increasing on $]r/2,+\infty[$. Hence $f(\sum_{1\leq i\leq k}m_i)$ has rank $k$ or $r-k$ and we are done.
\end{proofarg}

Let now  $J=J_1\times...\times J_s$ be the decomposition of $J$ into simple factors where $J_i$ as unit $e_i$. Let also $r$ (resp. $r_k$) be the rank of $J$ (resp. of $J_k$) and $\mathcal{P}_i(J)$ (resp. $P_i(J_k)$) be the set of idempotents of rank $i$ in $J$ (resp. in $J_k$). Define
\begin{align*}
  Q_1(J_k)=\{e_1\}\times\cdots\{e_{k-1}\}\times \mathcal{P}_{{r_k}-1}(J_k)\times \{e_{k+1}\}\times\cdots\times\{e_s\}.
\end{align*}
 Then the connected components of $\mathcal{P}_1(J)$ (resp. $\mathcal{P}_{r-1}(J)$) are exactly the $P_1(J_k)$ (resp. the $Q_1(J_k)$). Moreover $x\mapsto e-x$ is a diffeomorphism between $P_1(J_k)$ and $Q_1(J_k)$ . Now take $a\in[1,s]$. There are two possibilities for $\mathcal{P}(J_a)$ according to the Proposition \ref{prop:conncomp}

\noindent\textbf{Case one:} $f$ sends $\mathcal{P}_1(J_a)$ onto $\mathcal{P}_1(J_b)$ for some $b$.
% Then according to the Proposition $1.7$, $J_a$ and $J_b$ are isomorphic. But then the traces of idempotents of $\mathcal{P}_1(J_a)$ and of $\mathcal{P}_1(J_b)$ are  the same and 
Since $h_*$ is trace preserving we must have $h_*(\mathcal{P}_1(J_a))=\mathcal{P}_1(J_b)$ and by linearity $h_*(J_a)=J_b$.

\noindent\textbf{Case two:} $f$ sends $\mathcal{P}_1(J_a)$ onto $Q_1(J_b)$ for some $b$. 
% Since $Q_1(J_b)$ and $P_1(J_b)$ are homeomorphic (via $x\mapsto e-x$) so are $P_1(J_a)$ and $P_1(J_b)$  and once again $J_a$ et $J_b$ are isomorphic. 
But then if $p\in\mathcal{P}(J_a)$ we have $\Trace{f(p)}=\Trace{e}-\Trace{p}=\Trace{e}-1$ and from $h_*(p)=f(p)+\lambda e$ we then deduce that $\lambda=2/\Trace{e}-1$. The decomposition of $h_*(p)$ corresponding to the splitting $J=J_0\otimes (\R e)$ is then
\begin{align}\label{eq:2}
  h_*(p)=(1/\Trace{e} e+f(p)-e)+1/\Trace{e} e.
\end{align}
Consider now the first factor $J_1$. Assume that we are in the second case above, i.e. that $\mathcal{P}_1(J_1)=Q_1(J_b)$ for some $b\in[1,s]$.Then if we compose $h$ by the inversion $x\mapsto x^{-1}$, $h_*$ is composed by $x\mapsto -x$ and we easily deduce that the extension $\hat{f}_*$ of $h_*$ is replaced  by (see \eqref{eq:2} )
\begin{equation*}
  h_*(p)=-(1/\Trace{e} e+f(p)-e)+1/\Trace{e} e=e-f(p),
\end{equation*}
so that after composing $h$ with the inversion the factor $\mathcal{P}_1(J_1)$ is in the first case above. Since the inversion is an isometry and preserves $J_0$ it follows that we can always assume that $f(\mathcal{P}_1(J_1))=\mathcal{P}_1(J_b)$ for some $b\in[1,s]$.
\begin{lemma}
  Either all the factors are in the first case above or they are all in the second.
\end{lemma}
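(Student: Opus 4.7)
The plan is to argue by contradiction. After the normalisation carried out just before the statement we may assume $J_1$ is in the first case. I would pick a factor $J_a$ ($a\neq 1$) supposed to be in the second case and produce a contradiction by computing $\hat{h}_*$ on a rank-$2$ idempotent. Take $p\in\mathcal{P}_1(J_1)$ and $p'\in\mathcal{P}_1(J_a)$: since they lie in distinct simple factors they are orthogonal, so $p+p'$ is a non-trivial idempotent (the degenerate case $r\leq 2$ in which the two cases coincide can be set aside).

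First, using trace-preservation of $\hat{h}_*$ (which follows from $\hat{h}_*(e)=e$ together with $\hat{h}_*(J_0)=J_0$), I would pin down the additive scalars in each case. In the first case for $J_1$ the identity $\Trace{\hat{h}_*(p)}=\Trace{p}=1$ forces $\lambda=0$ in $\hat{h}_*(p)=f(p)+\lambda e$, so $\hat{h}_*(p)=q$ for some $q\in\mathcal{P}_1(J_{b_1})$. In the second case for $J_a$ the same computation yields $\lambda=\tfrac{2}{\Trace{e}}-1$, so $\hat{h}_*(p')=(e-q')+\bigl(\tfrac{2}{\Trace{e}}-1\bigr)e$ for some $q'\in\mathcal{P}_1(J_{b_a})$, exactly as already derived just before the statement. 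Adding these,
\begin{equation*}
\hat{h}_*(p+p')=q-q'+\tfrac{2}{\Trace{e}}\,e.
\end{equation*}
On the other hand $p+p'$ is a non-trivial idempotent, so $\hat{h}_*(p+p')=f(p+p')+\mu e$ with $f(p+p')$ a \emph{non-trivial} idempotent; consequently $q-q'+c\,e=f(p+p')$ must be a non-trivial idempotent of $J$, where $c=\tfrac{2}{\Trace{e}}-\mu$.

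I would then finish by a spectral analysis split into two subcases. If $b_1\neq b_a$ then $q$ and $q'$ sit in different simple factors and are orthogonal, so they diagonalise in a common Jordan frame and $q-q'$ has eigenvalues $\{1,-1,0,\dots,0\}$; shifting by $c\,e$ gives spectrum $\{1+c,\,-1+c,\,c\}$, and requiring all three values to lie in $\{0,1\}$ forces simultaneously $c\in\{-1,0\}$, $c\in\{1,2\}$ and $c\in\{0,1\}$, which has no solution. If $b_1=b_a=b$, I would exploit that $p\mapsto f(p)$ is a homeomorphism $\mathcal{P}_1(J_1)\to\mathcal{P}_1(J_b)$ and $p'\mapsto e-f(p')$ is a homeomorphism $\mathcal{P}_1(J_a)\to\mathcal{P}_1(J_b)$ (each being the restriction of $f$ to a single connected component, possibly composed with $x\mapsto e-x$); both being surjective onto the same set, I may choose $p,p'$ so that $q=q'$. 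Then $q-q'+c\,e=c\,e$ forces the non-trivial idempotent $f(p+p')$ to be a scalar multiple of $e$, hence trivial, a contradiction.

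The main obstacle is really the bookkeeping of the additive scalars coming from the passage between $h_*$ on $J_0$ and its extension $\hat{h}_*$ on $J$: getting the constant $\tfrac{2}{\Trace{e}}-1$ correct in the second case is essential, because once the correct expression of $\hat{h}_*(p+p')$ is in hand the spectral argument is short and the surjectivity statement in the subcase $b_1=b_a$ is immediate from the homeomorphism properties of $f$ on connected components.
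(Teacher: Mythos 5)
Your proof is correct, but it runs along a genuinely different line from the paper's. The paper stays entirely inside the seminorm $\|\cdot\|_\sigma$: it first shows $f(\mathcal{Q}_1(J_1))=\mathcal{Q}_1(J_b)$ by computing the spectrum of differences $q-p$ with $q\in\mathcal{Q}_1(J_c)$ and $p\in\mathcal{P}_1(J_b)$, uses injectivity of $f$ on components to force $c\neq b$ for a hypothetical second-case factor, and then reaches the contradiction $2=\|p_1-p_2\|_\sigma=\|f(p_1)-f(p_2)\|_\sigma=1$. You instead exploit the linearity and trace-preservation of $\hat{h}_*$ on the rank-two idempotent $p+p'$, and the contradiction comes from the extreme-point proposition (non-trivial idempotents go to non-trivial idempotents modulo $\R e$) applied to $p+p'$: the element $q-q'+c\,e$ can never have spectrum in $\{0,1\}$ when $b_1\neq b_a$, and is a multiple of $e$ for a suitable choice of $p,p'$ when $b_1=b_a$. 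What each approach buys: yours disposes of the awkward possibility $b_1=b_a$ directly by surjectivity of $f$ on connected components, so you never need the paper's preliminary step locating $f(\mathcal{Q}_1(J_1))$; the paper's argument, in exchange, only ever evaluates $f$ on rank-one and rank-$(r-1)$ idempotents and needs nothing about $f$ on higher-rank idempotents. Your scalar bookkeeping ($\lambda=0$ in case one, $\lambda=2/\Trace{e}-1$ in case two) agrees with the paper's equation for $h_*(p)$, and setting aside $r\leq 2$ is harmless since there the two cases literally coincide and the statement is vacuous. One small point worth making explicit if you write this up: the third constraint $c\in\{0,1\}$ in your spectral analysis presupposes $r>2$ so that $c$ actually occurs as an eigenvalue, but the first two constraints $c\in\{-1,0\}$ and $c\in\{1,2\}$ are already incompatible, so nothing is lost.
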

\begin{proof}
  We can always assume that $s\geq2$. From the previous discussion we can always assume that $J_1$ is such that we have $f(\mathcal{P}_1(J_1))=\mathcal{P}_1(J_b)$ for some $b\in[1,s]$.

Let us prove first that we must have $f(\mathcal{Q}_1(J_1))=\mathcal{Q}_1(J_b)$. If $c\in[1,s]$ is different from $b$ then it is easy to see that the spectrum of $q-p$ is independent of the choice of $q\in\mathcal{Q}_1(J_c)$ and $p\in\mathcal{P}_1(J_b)$ and contains exactly $r-2$ ones and $2$ zeros. But, for $p\in\mathcal{P}_1(J_b)$, $q=e-p\in\mathcal{Q}_1(J_c)$ and the spectrum of $q-p=e-2p$ contains at least one $1$ and one $-1$ (because $s\geq2$). Hence $b$ is the only element of $c\in[1,s]$ $\mathcal{Q}_1(J_b)$ such that we do not have $\|q-p\|_\sigma=1$ for every $q\in\mathcal{Q}_1(J_c)$ and $p\in\mathcal{P}_1(J_b)$. Since $f$ preserves $\|\cdot\|_\sigma$ and $\mathcal{P}_1(J_b)$ we must have $f(\mathcal{Q}_1(J_1))=\mathcal{Q}_1(J_b)$.

Let us assume that $J_2$ is such that $f(\mathcal{P}_1(J_2))=\mathcal{Q}_1(J_c)$ for some $c\in[1,s]$. We must have $c\neq b$ because $f$ is injective. If $p_1\in\mathcal{P}_1(J_1)$ and $p_2\in\mathcal{P}_1(J_2)$ then 
\begin{equation*}
  \|p_1-p_2\|_\sigma=2.
\end{equation*}
However it is easy to see that we must have $\|f(p_1)-f(p_2)\|_\sigma=1$ because $c\neq b$: contradiction.
\end{proof}
Hence after possibly composing $h$ with the inversion we can assume that all the factors are in the first case, i.e. that $f$ preserves the set of irreducible idempotents of each simple factor. Composing again $h$ by the Jordan automorphism that permutes isometric factors we can moreover assume that $h$ preserves each simple factor. We now prove that $h$ must be linear.
\begin{prop}
  $h$ is linear (i.e. is the restriction to $\mathcal{C}$ of a linear isomorphism of $\R^n$).
\end{prop}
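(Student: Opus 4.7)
The plan is to show that the canonical linear extension $\hat{h}_{*}:J\to J$ of $h_{*}$, defined by $\hat{h}_{*}(e)=e$ and $\hat{h}_{*}|_{J_0}=h_{*}$, induces $h$ on the projectification of $\mathcal{C}$. Since $\hat{h}_{*}$ is visibly a linear isomorphism of $J=\R^n$, this will exhibit $h$ as the restriction to $\mathcal{C}$ of such a map. Note that $\hat{h}_{*}$ is trace-preserving by construction, so at any rank-one idempotent $p\in\idemrank{1}{J_k}$ the identity $\hat{h}_{*}(p)=f(p)+\lambda e$ combined with the first-case reduction $f(p)\in\idemrank{1}{J_k}$ forces $\lambda=0$, i.e.\ $\hat{h}_{*}(p)=f(p)$.

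The key step is to verify that $\hat{h}_{*}$ maps Jordan frames to Jordan frames. Let $(p_i)_{1\leq i\leq r}$ be such a frame. The images $\hat{h}_{*}(p_i)=f(p_i)$ are rank-one idempotents (each in the same simple factor as its preimage), they sum to $\hat{h}_{*}(e)=e$ by linearity, they are pairwise distinct because $\hat{h}_{*}$ is injective, and they are pairwise simultaneously diagonalisable because $\hat{h}_{*}$ preserves simultaneous diagonalisation (a fact already invoked, itself a consequence of the preservation of both the geometric and the spectral mean by $h$). By the Remark on simultaneously diagonalisable primitive idempotents, pairwise distinct ones are pairwise orthogonal, so $(f(p_i))_{1\leq i\leq r}$ is a Jordan frame.

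For any $a\in\mathcal{C}$ with spectral decomposition $a=\sum_i\lambda_i p_i$, set $u_0=\log a-\mu e\in J_0$, where $\mu=\tfrac{1}{\trace{e}}\sum_i\log\lambda_i$. Since $h$ is a Hilbert isometry fixing $e$, its action on the projectification sends $[a]=[\exp(u_0)]$ to $[\exp(h_{*}(u_0))]$, and a direct exp-log computation using the linearity of $\hat{h}_{*}$ together with the Jordan-frame property of $(f(p_i))$ gives
\begin{align*}
\exp(h_{*}(u_0))=\exp\Bigl(\sum_i(\log\lambda_i-\mu)f(p_i)\Bigr)=e^{-\mu}\sum_i\lambda_if(p_i)=e^{-\mu}\hat{h}_{*}(a).
\end{align*}
Hence $h(a)$ and $\hat{h}_{*}(a)$ lie on the same ray in $\mathcal{C}$, which precisely says that $h$ is the restriction to $\mathcal{C}$ of the linear isomorphism $\hat{h}_{*}$ of $\R^n$.

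The main obstacle is the Jordan-frame preservation: the identity $\hat{h}_{*}(p)=f(p)$ without any $\lambda$-shift relies crucially on all simple factors having been placed in the first case by the preceding reductions, while the upgrade from "distinct and pairwise simultaneously diagonalisable" to "pairwise orthogonal" for primitive idempotents is what allows the exponential above to be computed coordinatewise in the image frame. Once Jordan frames are known to be preserved, the rest is the short computation displayed above.
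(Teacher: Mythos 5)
Your proof is correct and follows essentially the same route as the paper's: both hinge on the preservation of primitive idempotents together with the fact that distinct simultaneously diagonalisable primitive idempotents are orthogonal, which is exactly what makes $\hat{h}_*$ a Jordan-frame-preserving (hence Jordan) isomorphism commuting with the exponential. Your version merely spells out, via the explicit ray computation in the image frame, the step the paper compresses into ``$h_*$ commutes with the exponential''.
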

\begin{proof}
  We already know that $h_*$ preserves the set of primitive idempotents. Let us show that $h_*$ sends orthogonal primitive idempotents to orthogonal primitive idempotents. But two primitive idempotents are orthogonal if and only if they are simultaneously diagonalisable and distinct. Since $h_*$ preserves simultaneously diagonalisable pairs and is injective it must also preserve pairs of orthogonal primitive idempotents. It follows easily that $h_*$ is a Jordan isomorphism. But then $h_*$ commutes with the exponential and
  \begin{align*}
    \forall x\in J,\ h(\exp x)&=\exp(h_* x)\\
    &=h_*\exp(x).
  \end{align*}
Hence $h$ is the restriction of $h_*$ to the symmetric cone and is linear. 
\end{proof}
\begin{cor}
  The automorphism group is a subgroup of index two or zero in the isometry group of a symmetric cone for the Hilbert metric. The automorphism group is equal to the isometry group only for the Lorentz cones, i.e. only when the underlying Jordan algebra has rank at most two. To be more precise and closer to the spirit of the classification of euclidean simple Jordan algebras, equality between the two groups appear only in the following three cases, the third being an infinite family
  \begin{enumerate}
  \item $\mathcal{C}$ is a half-line (i.e. has rank one),
  \item $\mathcal{C}$ is the positive quadrant of $\R^2$, i.e. the set of points with positive coordinates (this is the direct product of two half-lines),
  \item $n=\dim(J)\geq 3$ and $\mathcal{C}$ is the irreducible Lorentz cone i.e. the set of points $(x_1,\ldots,x_n)$ satisfying $x_1^2>x_2^2+\cdots+x_n^2$.
  \end{enumerate}
\end{cor}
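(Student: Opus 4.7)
The plan is to combine the main proposition of this section with a rank-based calculation. The preceding work shows that, after composing on the left by an element of $\auto{\mathcal{C}}$, every Hilbert isometry fixing $e$ is either the restriction of a Jordan automorphism or becomes such after composing with the algebraic inversion $\iota\colon a\mapsto a^{-1}$. Since $\auto{\mathcal{C}}$ acts transitively on $\mathcal{C}$, it follows that $\isom{\mathcal{C}}=\auto{\mathcal{C}}\cup\iota\cdot\auto{\mathcal{C}}$, so $[\isom{\mathcal{C}}:\auto{\mathcal{C}}]$ equals $1$ or $2$ depending on whether $\iota$ lies in $\auto{\mathcal{C}}$ \emph{projectively} (the Hilbert metric lives on the projectification of $\mathcal{C}$, so $\iota$ and any positive rescaling of it represent the same element).

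First I would determine when $\iota$ is a projective automorphism. Choose a Jordan frame $(p_i)_{i=1}^r$; for $x=\sum_i\lambda_i p_i\in\mathcal{C}$ one has
\begin{equation*}
  \det(x)\,x^{-1}=\sum_i\Bigl(\prod_{j\neq i}\lambda_j\Bigr)p_i,
\end{equation*}
a homogeneous polynomial of degree $r-1$ in $x$. For $r=1$ this is trivial; for $r=2$ the identity reduces to $\det(x)\,x^{-1}=\trace{x}\,e-x$, a linear function of $x$, so $[x]\mapsto[x^{-1}]$ is represented projectively by a linear map of $\R^n$ preserving $\mathcal{C}$, i.e.\ an element of $\auto{\mathcal{C}}$. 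For $r\geq 3$, restriction to the $(r-1)$-simplex spanned by the $p_i$ identifies $[x]\mapsto[x^{-1}]$ with the standard Cremona involution $[\lambda_1:\cdots:\lambda_r]\mapsto[\prod_{j\neq 1}\lambda_j:\cdots:\prod_{j\neq r}\lambda_j]$, which is not a projective-linear transformation (it contracts each coordinate hyperplane to a coordinate point), so no $A\in GL(\R^n)$ can realise $\iota$ projectively.

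Next I would enumerate the cones of rank at most two using the classification of euclidean Jordan algebras. Rank one forces $J=\R$ and the cone to be a half-line, giving case~(1). In rank two either $J$ is simple, in which case it is a spin factor $\R\oplus\R^{n-1}$ whose associated cone is the Lorentz cone $\{x_1^2>x_2^2+\cdots+x_n^2\}$, or $J$ splits as $\R\times\R$ with cone the positive quadrant of $\R^2$. The spin factor with $n=2$ is itself reducible and coincides up to linear isomorphism with the quadrant, yielding case~(2); for $n\geq 3$ the Lorentz cone is irreducible, yielding case~(3). In all three cases the explicit formula above exhibits $\iota$ as an element of $\auto{\mathcal{C}}$ (trivially for the half-line, the coordinate swap for the quadrant, and $x\mapsto\trace{x}\,e-x$ for the Lorentz cone), so $\auto{\mathcal{C}}=\isom{\mathcal{C}}$.

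Conversely, whenever the rank is at least three the Cremona calculation gives $\iota\notin\auto{\mathcal{C}}$, so $\iota$ (which is always a Hilbert isometry by the lemma on geodesic inversions) supplies an isometry that is not an automorphism, forcing index exactly two. The main conceptual step is the degree/Cremona argument showing that $[x]\mapsto[x^{-1}]$ fails to be projective-linear as soon as $r\geq 3$; once this is in place, the rest of the proof is a direct unpacking of the preceding proposition and of the classification of simple euclidean Jordan algebras.
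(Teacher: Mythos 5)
Your proposal is correct and follows the same overall architecture as the paper: reduce to the question of whether the inversion $\iota\colon x\mapsto x^{-1}$ is projectively an element of $\auto{\mathcal{C}}$, settle rank $\leq 2$ by the identity $\det(x)\,x^{-1}=\trace{x}\,e-x$ (which is exactly the computation in the paper), and enumerate the rank $\leq 2$ cones via the classification. Where you genuinely diverge is the rank $\geq 3$ step: the paper exhibits two explicit sequences $u^1_n=ne_1+e_2+e_3$ and $u^2_n=ne_1+2e_2+e_3$ whose rays converge to the same boundary ray $[e_1]$ while their inverses converge to the distinct rays $[e_2+e_3]$ and $[(1/2)e_2+e_3]$, contradicting continuity of a projective-linear map; you instead identify $\iota$ on the simplicial sub-cone spanned by a Jordan frame with the standard Cremona involution of degree $r-1$ and invoke its contraction of coordinate hyperplanes. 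Your version is more conceptual and explains \emph{why} the paper's sequences work, but as stated it has a small gap: the contracted hyperplanes lie on the boundary of the orthant, where $\iota$ is not defined, so ``contracts a hyperplane to a point'' must be justified by a limiting argument (if $A\in GL(\R^r)$ agreed projectively with $\iota$ on the open orthant, then for boundary $\mu$ with $\mu_1=0$ and $\mu_j>0$ otherwise one gets $A[\mu]=\lim A[\lambda]=[e_1]$, collapsing a face of projective dimension $r-2\geq 1$ and contradicting injectivity of $A$) or by a degree count on the polynomial components $\prod_{j\neq i}\lambda_j$ written in lowest terms. Once that sentence is added, your argument is complete; note that this added step is precisely the continuity-at-the-boundary phenomenon the paper's sequence construction makes explicit, and which the paper further comments on in connection with De la Harpe's observation that Hilbert isometries need not extend to the boundary.
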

\begin{proof}
First let us show that when the rank is two the geodesic inversion at $e$ is in $\auto{\mathcal{C}}$. But in that case if $(e_1,e_2)$ is a Jordan frame and $u=\lambda_1e_1+\lambda_2e_2$  (for $\lambda_1$, $\lambda_2>0$) we have
\begin{align*}
  u^{-1}=\frac{1}{\lambda_1}e_1+\frac{1}{\lambda_2}e_2&=\frac{1}{\lambda_1\lambda_2}\left(\lambda_2e_1+\lambda_1 e_2\right)\\
  &=\frac{1}{\lambda_1\lambda_2}\left((\lambda_1+\lambda_2-\lambda_1)e_1+(\lambda_1+\lambda_2-\lambda_2)\right)e_2\\
  &=\frac{1}{\lambda_1\lambda_2}\left(\Trace{u}e-u\right),
\end{align*}
which is clearly a projective transformation (if we only consider the restriction of the inversion to the set of $x$ satisfying $\det(x)=1$ then it is even ``linear''). The rank one case is trivial so their only remains to prove that when the rank is at least three the inversion is not projective. This can easily be shown directly but the following argument (due to M.~Crampon) gives more insight into what happens near the boundary of the cone.

First remark that the inversion is homogeneous (of degree $-1$) and so we can work with rays instead of restricting the inversion to  $\mathcal{C}_0$. For $x\in\mathcal{C}$ let $[x]$ be the ray through $x$. Let $e_1$, $e_2$ and $e_3$ be three orthogonal idempotents. Then if $n>0$ the inverse of $u^1_n=ne_1+e_2+e_3$ is $u^{-1}_n=(1/n)e_1+e_2+e_3$. Similarly the inverse of $u^2_n=ne_1+2e_2+e_3$ is $u_n^{-2}=(1/n)e_1+(1/2)e_2+e_3$. But the rays $[u^1_n]$ and $[u^2_n]$ defined by $u^1_n$ and $u^2_n$ both converge to the ray $[e_1]$. If the inversion were a projective map, the rays $[u^{-1}_n]$ and $[u^{-2}_n]$ would also converge to the same ray. But  $[u^{-1}_n]$ converges to $[e_2+e_3]$ and $[u^{-2}_n]$ converges to $[(1/2)e_2+e_3]$. Since those two rays differ the inversion cannot be projective.
  % We assume that $\mathcal{C}$ is not reduced to a ray. It is enough to prove that the geodesic inversion at $e$ is not in the automorphism group of $\mathcal{C}$. But if it were we could find a linear lift $h$ of it to $J$ fixing the identity and so this lift would be an isomorphism of the Jordan algebra $J$. Let $p_1$ be a non trivial idempotent of $J$ and let $p_2=e-p_1$. Then for every $x\not\in\{-1,0\}$ and $y\not\in\{-1,0\}$ the inverse of $e+xp_1+yp_2=(1+x)p_1+(1+y)p_2$ is $(1+x)^{-1}p_1+(1+y)^{-1}p_2$. But if $h$ is an algebra isomorphism then $h(e+xp_1+yp_2)=h(e)+h(xp_1+yp_2)=(1+x^{-1})p_1+(1+y^{-1})p_2$ and it is easy to see that for the choice of $x=1$ and $y=2$ the two vectors $(1+x)p_1+(1+y)p_2$ and $(1+x^{-1})p_1+(1+y^{-1})p_2$ are not proportional. Hence $h$ cannot be linear and so the inversion is not in $\auto{\mathcal{C}}$.
\end{proof}
\begin{rmk}
  For the rank two case we could equally have used the fact that the cone is then strictly convex and so the isometry group for the Hilbert metric is reduced to $\auto{\mathcal{C}}/\R$ (see \cite{de1993hilbert} ).
\end{rmk}
\begin{rmk}
  In fact we showed that the inversion does not have a continuous prolongation to the boundary. See the paper of De la Harpe \cite{de1993hilbert} where he investigates the ``blow off'' near the boundary for the simplicial cone of $\R^3$ (this shows that isometries need not admit a prolongation to the boundary of the convex).
\end{rmk}
\begin{question}
  If a map $f:J\rightarrow J$ preserves the semi-norm $\|\cdot\|_\sigma$ and the trace then it acts as the identity on $\R e$ (because $\R e$ is the set of elements with zero $\|\cdot\|_\sigma$ semi-norm and on this set the trace is injective) and its double-restriction to $J_0$ must be linear by the Mazur-Ulam Theorem since on this set the semi-norm $\|\cdot\|_\sigma$ is definite. In fact since the projection onto the second factor of the decomposition $J=J_0\oplus\R e$ is $x\mapsto\trace{x}/\trace{e}e$ and $f$ commutes with this operator, $f$ always preserves the second factor. From this it follows that $f$ must itself be linear and preserve the decomposition (indeed if $x_0=a+\lambda_0 e$ is the decomposition of $x_0$ in the direct sum $J=J_0\oplus\R e$ then $f(x_0)=b+\lambda_0 e$ for some $b\in J_0$ that must itself satisfy  $\|f(x_0)-b\|_{\sigma}=0$ and $\|f(x_0)-f(a)\|_{\sigma}=\|f(a)+\lambda_0 e-f(a)\|_{\sigma}=0$, so that $b=f(a)$). We proved above that if $f$ sends simultaneously diagonalisable pairs to simultaneously diagonalisable pairs then it is either a Jordan isomorphism or becomes one after composing with the linear map that can be written $(x_0,\lambda)\mapsto (-x_0,\lambda)$ in the splitting $J=J_0\oplus\R e$. The question is: do one needs to assume that simultaneously diagonalisable pairs are preserved, or is this always true? We did not find any evidence that the question was already investigated, even in the case of symmetric/hermitian matrices.
\end{question}
\bibliographystyle{plain} 
\bibliography{biblio} 
\end{document}